\documentclass[11pt]{amsart}

\usepackage{amsmath, url}
\usepackage{amssymb}
\usepackage{graphicx}

\newtheorem{theorem}{Theorem}[section]
\newtheorem{corollary}[theorem]{Corollary}
\newtheorem{lemma}[theorem]{Lemma}

\theoremstyle{definition}
\newtheorem{definition}[theorem]{Definition}
\newtheorem{example}[theorem]{Example}
\newtheorem{remark}[theorem]{Remark}

\numberwithin{equation}{section}

\def\beq{\begin{equation}}
\def\eeq{\end{equation}}

\def\bN{\mathbf{N}}

\def\bP{\mathbf{P}}

\def\bR{\mathbf{R}}

\def\bZ{\mathbf{Z}}

\def\bf{\textbf{}}

\def\bm{\mathbf{m}}

\def\b1{{\boldsymbol{1}}}

\def\cA{\mathcal{A}}
\def\cB{\mathcal{B}}

\def\cD{\mathcal{D}}

\def\cL{\mathcal{L}}
\def\cM{\mathcal{M}}

\def\cP{\mathcal{P}}

\def\cR{\mathcal{R}}

\def\beq{\begin{equation}}
\def\eeq{\end{equation}}
\def\beqn{\begin{equation*}}
\def\eeqn{\end{equation*}}
\def\beqy{\begin{eqnarray}}
\def\eeqy{\end{eqnarray}}
\def\beqyn{\begin{eqnarray*}}
\def\eeqyn{\end{eqnarray*}}

\def\bN{\mathbf{N}}

\def\bP{\mathbf{P}}

\def\bR{\mathbf{R}}

\def\bZ{\mathbf{Z}}

\def\bm{\mathbf{m}}

\def\b1{{\boldsymbol{1}}}

\def\cA{\mathcal{A}}
\def\cB{\mathcal{B}}

\def\cD{\mathcal{D}}

\def\cL{\mathcal{L}}
\def\cM{\mathcal{M}}

\def\cP{\mathcal{P}}

\def\cR{\mathcal{R}}

\newcommand{\calA}{\mathcal{A}}

\newcommand{\calC}{\mathcal{C}}
\newcommand{\calD}{\mathcal{D}}

\newcommand{\calF}{\mathcal{F}}

\newcommand{\calL}{\mathcal{L}}
\newcommand{\calM}{\mathcal{M}}

\newcommand{\norm}[1]{\left\lVert#1\right\rVert}
\newcommand{\trace}{\text{trace}}
\newcommand{\bbP}{\mathbb{P}^1}
\newcommand{\hO}{\hat{\Omega}}
\newcommand{\htheta}{\hat{\theta}}
\newcommand{\GL}{\text{GL}(2,\bR)}

\newcommand{\uu}{\underline{u}}
\newcommand{\uU}{\underline{U}}
\newcommand{\SL}{\text{SL}}

\title{Hyperbolicity and Ergodicity in Randomly Perturbed Billiard Systems}

\author[K. Nguyen]{Kien Nguyen}

\address[K. Nguyen]{University of Massachusetts, Amherst}
\email{{\tt kiennguyen@umass.edu}}

\author[H-K. Zhang]{HongKun Zhang}
\address[H-K. Zhang]{University of Massachusetts Amherst \& Great Bay University}
\email{\tt hongkunz@umass.edu}

\keywords{Nonuniform hyperbolicity, Lyapunov exponents,
random billiards, ergodicity}

\subjclass[2010]{37D25}

\begin{document}

\begin{abstract}
In this paper, we investigate the Lyapunov exponents and hyperbolicity of random billiard systems under small stochastic perturbations. Our study begins with a review of the necessary theoretical background on Lyapunov exponents for stationary sequences of matrices and Markov processes. We then consider several classical billiard systems and demonstrate that random perturbations can lead to significant changes in their dynamical properties. Specifically, we show that non-circular elliptic and lemon billiards exhibit both ergodicity and hyperbolicity under random perturbations, whereas circular billiards, while ergodic, maintain zero Lyapunov exponents. We also establish conditions under which the largest Lyapunov exponent becomes positive, indicating the emergence of hyperbolic behavior in these systems. These results provide new insights into the stability and chaotic properties of dynamical systems subjected to random noise, with implications for the broader study of smooth dynamical systems.
\end{abstract}

\maketitle


\section{Introduction}

One of the central problems in smooth dynamics is establishing the hyperbolicity of a given system. Hyperbolic systems are characterized by their sensitive dependence on initial conditions, and a key tool in their analysis is the study of Lyapunov exponents. These exponents provide crucial information regarding the stability of the dynamics in response to small perturbations in initial conditions.

Let \( M \) denote the collision space of a billiard, and let \( F: M \to M \) represent the billiard map on \( M \). The map \( F \) is a diffeomorphism on an open, dense subset of \( M \) and preserves a natural probability measure \( \mu \) on \( M \). By Oseledets' Theorem \cite{cb}, assuming integrability conditions and boundedness of the boundary curvature, the Lyapunov exponents \( \lambda_1(x) \geq \lambda_2(x) \) exist for \( \mu \)-almost every point \( x \in M \).

A point \( x \) is called \emph{hyperbolic} if its Lyapunov exponents are nonzero, and the map \( F \) is termed hyperbolic if \( \mu \)-almost every point in \( M \) is hyperbolic. For the class of billiards considered here, the two Lyapunov exponents are of opposite sign \cite{cb}, and a point is hyperbolic if the largest exponent \( \lambda_1(x) \) is positive, indicating strong expansion in one direction (and strong contraction in another) at that point.

The standard approach to proving the positivity of a Lyapunov exponent involves establishing the existence of a strictly invariant cone field on the tangent space \cite{don91}. However, this method can be challenging to apply to many billiard systems, such as the moon billiards discussed in \cite{zm}.

A different approach involves introducing random perturbations to the billiard system and studying the resulting stochastic dynamics. Although this method does not directly solve the deterministic problem, it provides valuable insights into the system's behavior under small random perturbations. Several works have explored stochastic perturbations of billiards, including \cite{magnetic}, \cite{zfspectrum}, \cite{zfgaps}, \cite{zfn},  \cite{markarian}, \cite{zsy}, \cite{zdp}. However, in these studies, the invariant measure of the perturbed system does not coincide with the natural measure of the deterministic billiard map, and the positivity of the Lyapunov exponent is not established. Our work is closely related to \cite{markarian} by Markarian et al. and \cite{bxy} by Blumenthal, Xue, and Young. In \cite{bxy}, the authors consider a random perturbation that preserves the invariant measure of the unperturbed system. However, the system under consideration is the Chirikov standard map on the torus, with a different type of perturbation.

In this paper, we demonstrate that adding small noise to a system at each iteration can lead to the positivity of the largest Lyapunov exponent, provided there is an initial source of hyperbolicity. Even for systems with zero Lyapunov exponents on a set of full measure, the presence of noise can render the Lyapunov exponent positive, regardless of the noise's magnitude. On the other hand, we show that circular billiards cannot exhibit positive Lyapunov exponents, even under significant perturbations. This is because the noise is added independently of the points, so the derivative remains unperturbed, while the original circular billiard is linear with zero Lyapunov exponent. However, in the case of non-circular elliptic billiards, we observe positive Lyapunov exponents due to the presence of a hyperbolic periodic point, which acts as a source of hyperbolicity. We conjecture that circular billiards are the only smooth and convex billiards that do not become hyperbolic under perturbation.

The paper is organized as follows. In Section 2, we review the necessary background on Markov processes, which are essential as the perturbation applied to the billiard map results in a Markov transition function on the collision space. Consequently, in the perturbed system, each trajectory can be viewed as a realization of a Markov process. In Section 3, we define the Lyapunov exponent for a stationary sequence of matrices associated with a stochastic process and provide a necessary condition under which the Lyapunov exponents are zero. Section 4 is dedicated to a detailed exploration of the perturbations applied to various classical billiards. Here, we establish the ergodicity and hyperbolicity of the perturbed systems, showing that non-circular elliptic and lemon billiards exhibit both properties. Additionally, we demonstrate that while random circular billiards maintain ergodicity, they do not exhibit hyperbolicity, as their Lyapunov exponents remain zero.

\section{Preliminaries on Lyapunov exponents of stationary sequences of matrices}

In this section, we review some background information on the Lyapunov exponents of a stationary sequence of matrices. Most of the materials in this section can be found in \cite{led1} and \cite{led2}. Let $(\Omega, \cA, \bP)$ be a probability space and $\theta: (\Omega, \cA) \to (\Omega, \cA)$ a measurable map that preserves the probability measure $\bP$. We denote by $\GL$ the group of $2\times 2$ real invertible matrices. When viewed as a measurable space, the group $\GL$ is equipped with its Borel $\sigma$-algebra. Let $A: \Omega \to \GL$ be a measurable map. Let $A_n = A\circ \theta^n$. Then the sequence $(A_n)_{n\geq 0}$ is a stochastic process defined on the underlying probability space $(\Omega, \cA, \bP)$ with values in $\GL$. In fact, $(A_n)_{n\geq 0}$ is a stationary stochastic process because of the invariance of the measure $\bP$ under the map $\theta$. 

We construct another sequence $(A^{(n)})_{n\geq 1}$ of matrices by:
\begin{equation}
  A^{(n)}(\omega) := A_{n-1}(\omega)\cdot A_{n-2}(\omega) \cdots A(\omega)
\end{equation}
for any $n \geq 1$.


\begin{definition}\label{defmaxlya}
Let $(\Omega, \cA, \bP,\theta, A)$ be as above. The Lyapunov exponent at $\omega$ of the sequence $(A_{n}(\omega))_{n\geq 0}$ is defined to be:
\begin{equation}
\lambda(\omega) = \lim_{n\to \infty}\frac{1}{n}\log\norm{A^{(n)}(\omega)}
\end{equation}
if the limit exists. Here the norm $\|\cdot\|$ is the operator norm of a linear map $\bR^2\to \bR^2$.
\end{definition}
By the subadditivity of the sequence $(\|A^{(n)}(x)\|)_{n\geq 0}$ and stationarity of the sequence $(A_n)_n\geq 0$, we have the following lemma:

\begin{lemma}
For $\bP$-almost every $\omega$, the Lyapunov exponent at $\omega$ of the sequence $(A_{n})_{n\geq 0}$ exists in $\bR\cup \{-\infty\}$.
\end{lemma}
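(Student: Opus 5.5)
The plan is to apply Kingman's subadditive ergodic theorem to the sequence of functions $f_n(\omega) := \log\norm{A^{(n)}(\omega)}$, $n\geq 1$, on $(\Omega,\cA,\bP,\theta)$. As in the Furstenberg--Kesten setting of \cite{led1}, we work under the standing integrability assumption $\log^+\norm{A}\in L^1(\bP)$. The lemma does not state this, but Kingman's theorem needs it, and without it the limit can fail to exist or be $+\infty$. I will state this hypothesis explicitly at the start of the proof.

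\textbf{Step 1: measurability.} Each $A_k=A\circ\theta^k$ is measurable, matrix multiplication is continuous on $\GL$, and the operator norm is continuous. Hence $A^{(n)}$ is measurable and $f_n$ is a measurable real-valued function; it is finite because $A^{(n)}(\omega)$ is invertible, so its norm is positive.

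\textbf{Step 2: the cocycle identity and subadditivity.} Directly from the definition,
\begin{equation*}
A^{(n+m)}(\omega)=A^{(m)}(\theta^n\omega)\, A^{(n)}(\omega),
\end{equation*}
since $A_{n+j}(\omega)=A_j(\theta^n\omega)$. Submultiplicativity of the operator norm then gives
\begin{equation*}
f_{n+m}(\omega)\le f_m(\theta^n\omega)+f_n(\omega).
\end{equation*}
So $(f_n)$ is a subadditive process over the measure-preserving map $\theta$.

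\textbf{Step 3: integrability and applying Kingman.} Iterating Step 2 gives $f_n^+\le \sum_{k=0}^{n-1}\log^+\norm{A}\circ\theta^k$, so every $f_n^+$ is integrable. Kingman's theorem then says that $\frac1n f_n$ converges $\bP$-almost surely to a $\theta$-invariant limit $\lambda$ with values in $\bR\cup\{-\infty\}$. Moreover, $\int\lambda\, d\bP=\inf_n\frac1n\int f_n\, d\bP$, and this infimum may equal $-\infty$. This is exactly the claim of the lemma.

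\textbf{Main obstacle.} The only real subtlety is the integrability hypothesis. Kingman's theorem needs only $f_1^+=\log^+\norm{A}\in L^1$, not integrability of $f_1$ itself, and this weaker requirement is precisely why the limit is allowed to be $-\infty$. If one insisted on avoiding any integrability assumption, I would first try to reduce to the invertible-matrix bound $\log\norm{A^{(n)}}\ge -\sum_k\log\norm{A_k^{-1}}$. However, that bound gives only a lower estimate and does not control $+\infty$ behaviour, so I would keep the $\log^+$ assumption rather than pursue that route.
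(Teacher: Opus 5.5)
Your proof is correct and follows the same route as the paper, which justifies the lemma in one line by the subadditivity of $\log\norm{A^{(n)}}$ and the stationarity of $(A_n)$, i.e.\ by Kingman's subadditive ergodic theorem. You are also right to make the hypothesis $\int_\Omega \log^+\norm{A(\omega)}\,\bP(d\omega)<\infty$ explicit: the paper's statement omits it (it appears only in Lemma \ref{lyaintro}), yet it is genuinely needed, since for $A=e^{g}I$ with $g$ not integrable, $\frac1n\log\norm{A^{(n)}}$ is a Birkhoff average of $g$, which can tend to $+\infty$ or fail to converge.
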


\begin{lemma}[\cite{led1} Proposition 1.1]\label{lyaintro}
  Let $(\Omega, \cA, \bP,\theta, A)$ be as defined above. Suppose that:
  \begin{equation}
    \int_{\Omega}\log^+ \norm{A(\omega)} \bP(d\omega) < \infty,
  \end{equation}
  where $\log^+ = \max(\log, 0)$. Then the following two limits
  \begin{equation}
    \lim_{n\to \infty}\frac{1}{n}\int_{\Omega}\log \norm{A^{(n)}(\omega)}\bP(d\omega)
  \end{equation}
  and 
  \begin{equation}
    \lim_{n\to \infty}\frac{1}{n}\int_{\Omega}\log |\det{A^{(n)}(\omega)}|\bP(d\omega)
  \end{equation}
  exist in the extended real line $\bR \cup \left\{- \infty \right\}$.
\end{lemma}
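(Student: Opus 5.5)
The plan is to prove both limits with one argument: subadditivity of the integrated sequences, followed by Fekete's lemma, with the convention that the value may be $-\infty$. Set
\[
a_n := \int_\Omega \log\norm{A^{(n)}(\omega)}\,\bP(d\omega), \qquad d_n := \int_\Omega \log|\det A^{(n)}(\omega)|\,\bP(d\omega).
\]
The first step is to check that these quantities are well defined in $\bR\cup\{-\infty\}$. By submultiplicativity of the operator norm,
\[
\log^+\norm{A^{(n)}} \le \sum_{k=0}^{n-1}\log^+\norm{A_k}.
\]
Since $\bP$ is $\theta$-invariant, each $\int\log^+\norm{A_k}\,d\bP$ equals $\int\log^+\norm{A}\,d\bP<\infty$. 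Hence the positive part of $\log\norm{A^{(n)}}$ is integrable, and $a_n\in\bR\cup\{-\infty\}$ is well defined. For the determinant, $|\det B|\le\norm{B}^2$ for $B\in\GL$, so $\log^+|\det A^{(n)}|\le 2\log^+\norm{A^{(n)}}$ is integrable as well. Thus $d_n$ is also well defined in $\bR\cup\{-\infty\}$.

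The second step is the cocycle identity $A^{(n+m)}(\omega)=A^{(m)}(\theta^n\omega)\,A^{(n)}(\omega)$, which follows from $A_k\circ\theta^n=A_{k+n}$. For the norm it gives the pointwise inequality
\[
\log\norm{A^{(n+m)}}\le \log\norm{A^{(m)}}\circ\theta^n+\log\norm{A^{(n)}}.
\]
Integrating and using invariance of $\bP$ yields $a_{n+m}\le a_m+a_n$; the integration is legitimate because all positive parts are integrable, so sums involving $-\infty$ pose no problem. For the determinant, multiplicativity gives the exact identity $\log|\det A^{(n)}|=\sum_{k=0}^{n-1}\log|\det A_k|$. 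Integrating gives $d_n=n\,d_1$, provided $d_1$ is finite, or more carefully that the integral of the sum equals the sum of the integrals, which holds because positive parts are integrable. So $d_n/n=d_1$ is constant, and in particular the second limit exists and equals $\int\log|\det A|\,d\bP\in\bR\cup\{-\infty\}$.

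The last step is Fekete's lemma for a subadditive sequence with values in $\bR\cup\{-\infty\}$: $\lim a_n/n=\inf_n a_n/n$. If some $a_N=-\infty$, subadditivity gives $a_n=-\infty$ for all $n\ge N$, and the limit is $-\infty$. Otherwise the standard argument applies: write $n=qN+r$, so that $a_n\le q a_N+a_r$, and then $\limsup a_n/n\le a_N/N$. The limit is then the infimum, possibly $-\infty$.

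The only delicate point is the bookkeeping with the value $-\infty$, namely justifying that the integrals split additively when negative parts may fail to be integrable. This is handled entirely by the uniform integrability of the positive parts established in the first step.
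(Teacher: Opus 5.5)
Your proposal is correct and follows essentially the route the paper relies on. The paper gives no proof of its own: it cites Ledrappier's Proposition 1.1 and points only to subadditivity and stationarity, which is exactly your argument via the cocycle identity and Fekete's lemma, with the $-\infty$ cases handled by integrability of the positive parts. Your observation that $d_n = n\,d_1$ also matches the paper's later remark that the limit sign can be dropped in the identity for $\lambda_1+\lambda_2$.
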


  Under the conditions of Lemma \ref{lyaintro}, let $\lambda_1$ and $\lambda_2$ be real numbers such that:

  \begin{equation}\label{1st}
    \lim_{n\to \infty}\frac{1}{n}\int_{\Omega}\log \norm{A^{(n)}(\omega)}\bP(d\omega) = \lambda_1
  \end{equation}
  and 
  \begin{equation}\label{2nd}
    \lim_{n\to \infty}\frac{1}{n}\int_{\Omega}\log |\det{A^{(n)}(\omega)}|\bP(d\omega) = \lambda_1 + \lambda_2.
  \end{equation}
  We set $\lambda_1$ or $\lambda_2$ to be $-\infty$ if the first or second limit is $-\infty$. We call the numbers $\lambda_1$ and $\lambda_2$ the {\it Lyapunov exponents} of the stationary process $(A_{n})_{n\geq 0}$.

We recall that any real square matrix $A$ can always be decomposed as $A = U\Sigma V^T$ where $U$ and $V^T$ are orthogonal matrices and
$\Sigma = 
\begin{pmatrix}
  \sigma_1(A)& 0\\
  0& \sigma_2(A)
\end{pmatrix}$ is a diagonal matrix with $\sigma_1(A) \geq \sigma_2(A) \geq 0$. This is called the Singular Value Decomposition of the matrix $A$. This decomposition tells us that geometrically a linear transformation is a composite of a rotation, a scaling and another rotation. The number $\sigma_1(A)$ is the larger scaling factor among $\sigma_1(A)$ and $\sigma_2(A)$. The columns of $V$ and $U$ tell us the directions in $\bR^2$ in which we will see the largest and the smallest scaling, and where those directions move to after the transformation. The following lemma allows us to write the Lyapunov exponents in terms of the scaling factors.

\begin{lemma}
  Under the same conditions as in Lemma \ref{lyaintro}, we have that:
  \begin{equation}
    \lambda_i = \lim_{n \to \infty}\frac{1}{n}\int_{\Omega}\log \sigma_i(A^{(n)}(\omega))\bP(d\omega)
  \end{equation}
  \label{lyasigma}
  for $i = 1, 2$. It is then clear that $\lambda_1 \geq \lambda_2$.
\end{lemma}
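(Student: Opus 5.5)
The statement follows from two elementary facts about singular values of $2\times 2$ matrices, combined with the definitions \eqref{1st} and \eqref{2nd}. First, for the operator norm we have $\norm{A} = \sigma_1(A)$, since the singular value decomposition $A = U\Sigma V^T$ with $U,V$ orthogonal gives $\norm{A}=\norm{\Sigma}=\max(\sigma_1,\sigma_2)=\sigma_1(A)$. Second, $|\det A| = |\det U|\,|\det \Sigma|\,|\det V^T| = \sigma_1(A)\sigma_2(A)$. I will apply both pointwise to $A^{(n)}(\omega)$, which is invertible for every $\omega$ and $n$, so that $\sigma_1(A^{(n)}(\omega)) \ge \sigma_2(A^{(n)}(\omega))>0$.

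\emph{Case $i=1$.} Here $\log\sigma_1(A^{(n)}(\omega)) = \log\norm{A^{(n)}(\omega)}$ identically, so the claimed limit is exactly \eqref{1st}. This limit exists by Lemma \ref{lyaintro}.

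\emph{Case $i=2$.} Write
\begin{equation*}
\log\sigma_2(A^{(n)}(\omega)) = \log|\det A^{(n)}(\omega)| - \log\norm{A^{(n)}(\omega)}.
\end{equation*}
Integrate over $\Omega$, divide by $n$, and subtract the limit \eqref{1st} from \eqref{2nd}. This gives $(\lambda_1+\lambda_2)-\lambda_1=\lambda_2$.

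The main obstacle is integrability and the handling of $-\infty$. Splitting the integral of a difference into a difference of integrals needs at least one term to be finite. By subadditivity of the norm and stationarity, $\int \log^+\norm{A^{(n)}} \le n\int\log^+\norm{A}<\infty$, so the positive part of $\log\norm{A^{(n)}}$ is integrable. Moreover $\log\norm{A^{(n)}} \ge \frac12 \log|\det A^{(n)}|$, because $\sigma_1^2\ge\sigma_1\sigma_2$.

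\emph{Finite case.} When $\lambda_1$ is finite, each integral $\int\log\norm{A^{(n)}}$ is finite (it is bounded above, and it is not $-\infty$ since $\lambda_1>-\infty$ and the integrals are subadditive). The subtraction is then legitimate, and the limit of the difference is $\lambda_2$, including the value $-\infty$ when the determinant limit is $-\infty$.

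\emph{Degenerate case.} When $\lambda_1=-\infty$, then $\lambda_2=-\infty$ by convention. Since $\sigma_2\le\sigma_1$, the $\sigma_2$-averages are dominated by the $\sigma_1$-averages, so they also tend to $-\infty$, which matches the convention.

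Finally, $\lambda_1\ge\lambda_2$ follows by integrating the pointwise inequality $\sigma_1\ge\sigma_2$ and passing to the limit.
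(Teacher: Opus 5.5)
Your proof is correct and follows the only argument the paper points to: it states the lemma without proof right after recalling the singular value decomposition, and the implicit justification is your pair of identities $\norm{A}=\sigma_1(A)$ and $|\det A|=\sigma_1(A)\sigma_2(A)$, applied to $A^{(n)}(\omega)$ and combined with \eqref{1st} and \eqref{2nd}. Your additional bookkeeping fills in details the paper leaves implicit: integrability of $\log^+\norm{A^{(n)}}$, finiteness of $\int\log\norm{A^{(n)}}$ via subadditivity when $\lambda_1>-\infty$, and the separate treatment of the case $\lambda_1=-\infty$.
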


In fact, since we have that
\begin{equation*}
  \det A^{(n)}(\omega) := \det A_{n-1}(\omega)\cdot \det A_{n-2}(\omega) \cdots \det A(\omega)
  \end{equation*}
  and that the map $\theta:\Omega \to \Omega$ preserves the measure $\bP$, which implies that the sequence $(A_n)_{n\geq 0}$ is stationary,  we could drop a limit sign to have:
  \begin{align}\label{lyasumequaldet}
    \lambda_1 + \lambda_2 &= \lim_{n\to \infty}\frac{1}{n}\int_{\Omega}\log|\det A^{(n)}(\omega)|\bP(d\omega)\\
  &=\int_{\Omega}\log |\det(A(\omega))|\bP(d\omega)
\end{align}

The Lyapunov exponent $\lambda(\omega)$ at $\omega$ in Definition \ref{defmaxlya} of the sequence $(A_{n}(\omega))_{n\geq 0}$ can be viewed as the logarithm of the rate of expansion (or contraction) of vectors along the path starting from $\omega$. If the underlying dynamical system is ergodic, they are constant $\bP$-almost everywhere:

\begin{theorem}[\cite{led1} Theorem 2.6]\label{lyaergodic}
  Let $(\Omega, \cA, \bP, \theta)$ be an ergodic system, $A: \Omega \to \GL$ a measurable map such that:
  \begin{equation}
    \int_{\Omega}\log^+\norm{A(\omega)}\bP(d\omega) < \infty.
  \end{equation}
  Let $\lambda_1 \geq \lambda_2$ be the Lyapunov exponents of the stationary process $(A_{n})_{n\geq 0}$. Then we have:
  \begin{equation}
    \lambda_1 = \lim_{n\to \infty}\frac{1}{n}\log\norm{A^{(n)}(\omega)} \hspace{1cm} \bP-a.s.
  \end{equation}
  and 
  \begin{equation}
    \lambda_1 + \lambda_2 = \lim_{n\to \infty}\frac{1}{n}\log|\det{A^{(n)}(\omega)}| \hspace{1cm} \bP-a.s.
  \end{equation}
\end{theorem}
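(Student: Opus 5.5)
The plan is to apply Kingman's subadditive ergodic theorem to the norm part and Birkhoff's ergodic theorem to the determinant part, and then identify the resulting constants with $\lambda_1$ and $\lambda_1+\lambda_2$ as defined in \eqref{1st} and \eqref{2nd}.

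\textbf{Norm part.} Set $f_n(\omega) = \log\norm{A^{(n)}(\omega)}$.

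1. The cocycle identity $A^{(n+m)}(\omega) = A^{(m)}(\theta^n\omega)\,A^{(n)}(\omega)$ holds because $A_k = A\circ\theta^k$. Submultiplicativity of the operator norm then gives
\begin{equation*}
f_{n+m}(\omega) \le f_n(\omega) + f_m(\theta^n \omega),
\end{equation*}
so $(f_n)$ is a subadditive sequence over $(\Omega,\cA,\bP,\theta)$.

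2. The integrability hypothesis needed for Kingman's theorem is $f_1^+ = \log^+\norm{A}\in L^1(\bP)$, which is exactly the standing assumption.

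3. Kingman's theorem then says $\frac1n f_n$ converges $\bP$-a.s. to a $\theta$-invariant function with values in $\bR\cup\{-\infty\}$. The same theorem identifies its integral with $\lim_n \frac1n\int f_n\,d\bP = \inf_n \frac1n \int f_n\,d\bP$, a limit that exists by Lemma \ref{lyaintro}.

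4. Ergodicity forces the invariant limit to be a.s. constant. That constant is therefore $\lambda_1$ from \eqref{1st}, including the case $\lambda_1=-\infty$.

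\textbf{Determinant part.} Here $\log|\det A^{(n)}(\omega)| = \sum_{k=0}^{n-1} g(\theta^k\omega)$ with $g = \log|\det A|$, which is exactly additive. Since $|\det A|\le \norm{A}^2$, we have $g^+ \le 2\log^+\norm{A}\in L^1$. Birkhoff's ergodic theorem, in its version for functions whose positive part is integrable, gives a.s. convergence of $\frac1n\sum_{k<n} g\circ\theta^k$ to $\int g\,d\bP \in \bR\cup\{-\infty\}$. By \eqref{lyasumequaldet} this equals $\lambda_1+\lambda_2$.

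\textbf{Main obstacle.} The delicate point is the case $\int g = -\infty$ (equivalently $\lambda_1=-\infty$), where the standard $L^1$ statements do not apply directly. I would handle it by truncation. Put $g_M = \max(g,-M)$; then $g_M \in L^1$ and $\frac1n\sum_{k<n} g\circ\theta^k \le \frac1n\sum_{k<n} g_M\circ\theta^k$. Birkhoff's theorem for $g_M$ bounds the $\limsup$ of the left side by $\int g_M$, and monotone convergence gives $\int g_M \to -\infty$ as $M\to\infty$, so the averages tend to $-\infty$ a.s.

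Kingman's theorem already covers the value $-\infty$ under the hypothesis $f_1^+\in L^1$. If one prefers to cite only the finite version, the same truncation idea applies: replace $f_n$ by $\max(f_n,-nM)$, which remains subadditive, and let $M\to\infty$.
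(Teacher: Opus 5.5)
The paper gives no proof of this statement and simply quotes it from Ledrappier. Your argument is the standard proof of that result and is correct: Kingman's subadditive ergodic theorem for $\log\norm{A^{(n)}}$, Birkhoff's theorem with truncation for the additive cocycle $\log|\det A^{(n)}|$, and identification of the a.s.\ constants via \eqref{1st} and \eqref{lyasumequaldet}.

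One harmless slip: $\int\log|\det A|\,d\bP=-\infty$ is implied by $\lambda_1=-\infty$ but not equivalent to it. It also happens with $\lambda_1$ finite and $\lambda_2=-\infty$, e.g.\ $A(\omega)=\mathrm{diag}(1,a(\omega))$ with $\log^+a\in L^1$ and $\int\log a\,d\bP=-\infty$. Your truncation argument never uses that equivalence, so nothing breaks.
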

We have a straightforward corollary of the Theorem \ref{lyaergodic} in the case the function $A$ takes values in the set of matrices with determinant equals 1. Let $\SL(2,\bR)$ denote the set of $2\times 2$ real matrices with determinant 1.
\begin{corollary}\label{sumlya0}
  Let $(\Omega, \cA, \bP, \theta)$ be an ergodic system and $A:\Omega \to \SL(2,\bR)$ a measurable map such that
  \[
    \int_{\Omega}\log \max(\norm{A(\omega)}, \norm{A(\omega)^{-1}})\bP(d\omega) < \infty.
  \]
  Then both $\lambda_1$ and $\lambda_2$ are finite and moreover, $\lambda_1 + \lambda_2 = 0$.
\end{corollary}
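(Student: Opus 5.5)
Since $\det A(\omega)=1$, we have $\norm{A(\omega)}\ge 1$, so $\log\norm{A(\omega)}=\log^+\norm{A(\omega)}\ge 0$. Hence the integrability hypothesis of the corollary gives $\int_\Omega \log^+\norm{A(\omega)}\,\bP(d\omega)<\infty$. This is exactly the hypothesis of Lemma \ref{lyaintro} and Theorem \ref{lyaergodic}, so $\lambda_1$ and $\lambda_1+\lambda_2$ are well defined in $\bR\cup\{-\infty\}$.

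For the sum, $\det A^{(n)}(\omega)=1$ for every $n$ and $\omega$, so $\log|\det A^{(n)}(\omega)|=0$. By \eqref{lyasumequaldet}, or directly from Theorem \ref{lyaergodic},
\[
\lambda_1+\lambda_2=\int_\Omega\log|\det A(\omega)|\,\bP(d\omega)=0 .
\]
The only care needed is in the convention: this identity shows the second limit in \eqref{2nd} is finite (equal to $0$), so neither exponent is set to $-\infty$ through that limit.

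Finiteness of each exponent comes next. For the upper bound, submultiplicativity gives $\log\norm{A^{(n)}}\le\sum_{k=0}^{n-1}\log\norm{A_k}$. Stationarity then yields $\frac1n\int\log\norm{A^{(n)}}\,d\bP\le\int\log\norm{A}\,d\bP<\infty$, so $\lambda_1<\infty$. For the lower bound, $\norm{A^{(n)}}\ge 1$ because $A^{(n)}\in\SL(2,\bR)$, so $\lambda_1\ge 0>-\infty$. Combining with $\lambda_1+\lambda_2=0$ gives $\lambda_2=-\lambda_1$, which is finite.

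As a cross-check via Lemma \ref{lyasigma}, $\sigma_1\sigma_2=1$ gives $\log\sigma_2(A^{(n)})=-\log\sigma_1(A^{(n)})$. The hypothesis involving $\norm{A^{-1}}$ gives the symmetric bound $\frac1n\int\log\norm{(A^{(n)})^{-1}}\,d\bP\le\int\log\norm{A^{-1}}\,d\bP$. The main point of care, though not a real obstacle, is making sure no $-\infty$ convention sneaks in; everything else is bookkeeping.
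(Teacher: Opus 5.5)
Your proof is correct and follows the paper's one-line reasoning: $\det A^{(n)}\equiv 1$ makes the determinant limit in \eqref{2nd} vanish, so $\lambda_1+\lambda_2=0$. You also make the finiteness explicit via $0\le \lambda_1\le \int_\Omega \log\norm{A}\,d\bP<\infty$, using $\norm{B}\ge 1$ on $\SL(2,\bR)$ together with subadditivity and stationarity; the paper leaves this step implicit, and your argument in fact never needs ergodicity.
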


We are interested in the necessary conditions to have $\lambda_1 = \lambda_2$. Later we will use these criteria to show that $\lambda_1 \neq \lambda_2$ for certain systems by way of contradiction. The idea behind these necessary conditions is that if the two Lyapunov exponents are equal then a very special condition on measurability must be satisfied. Avila and Viana in \cite{av} discussed this phenomenon in a more general setting.

Let $\bbP$ be the real projective space of dimension 1. Elements in $\bbP$ are equivalence classes of the vectors in $\bR^2$ where two nonzero vectors $v$ and $w$ are said to be equivalent if they are parallel. For any nonzero vector $v\in \bR^2$ we denote by $[v]$ its equivalent class. For any $\omega\in \Omega$, we have $A(\omega)$ is a matrix in $\GL$. The matrix $A(\omega)$ is a linear transformation on the vector space $\bR^2$ and hence induces a map on $\bbP$:
\begin{equation}\label{actiononP1}
A(\omega)([v]) = [A(\omega)(v)] \hspace{0.5cm}\text{ for any } [v]\in \bbP.
\end{equation}

Let $\hO = \Omega \times \mathbb{P}^1$ and define a map $\htheta : \Omega\times\bbP \to \Omega\times\bbP$ by:
\[
  \htheta(\omega, \hat{v}) = (\theta(\omega), A(\omega)(\hat{v})).
\]
Let $\pi_1: \hO \to \Omega$ be the projection map onto the first component. Any probability measure $\xi$ on $\hO$ such that ${\pi_1}_*\xi = \bP$ can be {\it disintegrated} into a family $\{\xi_{\omega}:\omega \in \Omega\}$ of probability measures on $\bbP$ such that the function $\omega \mapsto \xi_{\omega}$ is $\cA$-measurable. This family is essentially unique and each $\xi_{\omega}$ is supported on the fibre $p_1^{-1}(\{\omega\}) \cong \bbP$. We only consider measures $\xi$ that projects to $\bP$. 

We have the following theorem of Ledrappier:
\begin{theorem}[\cite{led2} Theorem 1]
  Let $(\Omega,\cA, \bP,\theta)$ be a measure-preserving dynamical system, not necessarily ergodic. Let $A: \Omega \to \GL$ be a measurable function such that
    \[
    \int_{\Omega}\log \max(\norm{A(\omega)}, \norm{A(\omega)^{-1}})\bP(d\omega) < \infty.
    \]
    Let $\cA_0 \subset \cA$ be a sub $\sigma$-algebra such that both $\theta$ and  $A$ are $\cA_0$-measurable and that $\cA$ can be generated by all the iterates $\theta^{n}(\cA_0)$ of $\cA_0$, $n\in \bZ$.

    Suppose that $\lambda_1 = \lambda_2$. Then any disintegration of a $\htheta$-invariant measure $\xi$ is $\cA_0$-measurable (modulo null sets).
  \label{led}
\end{theorem}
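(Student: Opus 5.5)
The proof I would give is an invariance-principle argument in the spirit of Furstenberg and Ledrappier. It combines a martingale over a filtration built from $\cA_0$ with an entropy/Jacobian inequality on $\bbP$. The Jacobian term in that inequality is controlled by $\lambda_1-\lambda_2$. I am assuming the measurability hypothesis means $\theta^{-1}\cA_0\subset\cA_0$ (as for a one-sided past), so that $\cF_n:=\theta^{n}(\cA_0)$ is a monotone filtration whose union generates $\cA$.

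\textbf{Reduction and invariance.} I would first replace $A$ by $\tilde A(\omega)=|\det A(\omega)|^{-1/2}A(\omega)$. This does not change the projective action \eqref{actiononP1}, so it does not change $\htheta$ or the invariant measures $\xi$. It shifts both exponents by the same amount, by \eqref{lyasumequaldet}. So we may assume $\det A=\pm1$ and, by Corollary \ref{sumlya0}, $\lambda_1=\lambda_2=0$. A $\htheta$-invariant $\xi$ with $(\pi_1)_*\xi=\bP$ has disintegration satisfying the equivariance
\[
A(\omega)_*\xi_\omega=\xi_{\theta\omega}\quad \bP\text{-a.e.}
\]
I would define $\nu_\omega:=\bP(\xi_\cdot\mid\cA_0)(\omega)$, the conditional expectation of the measure-valued map. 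Since $\theta$ and $A$ are $\cA_0$-measurable, $\nu$ is again equivariant and projects to an invariant measure $\nu$ on $\hO$. The goal is to show $\xi=\nu$.

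\textbf{Entropy comparison.} Iterating the equivariance and using that the filtration exhausts $\cA$, the measures $\bP(\xi_\cdot\mid\cF_n)$ form a martingale converging to $\xi_\omega$. I would then introduce the fibred relative entropy $H=\int \mathrm{KL}(\xi_\omega\,|\,\nu_\omega)\,\bP(d\omega)$, or a bounded substitute such as $\int\!\int \log\frac{d\xi_\omega}{d(\xi_\omega+\nu_\omega)}$ to avoid absolute-continuity issues. The argument has three steps.
\begin{enumerate}
\item By equivariance of both families and stationarity, $H$ is invariant under one step of $\htheta$.
\item Jensen's inequality in the martingale step gives a loss of entropy that is strictly positive unless $\xi_\omega$ is already $\cF_0$-measurable.
\item This loss must be paid for by the projective Jacobian. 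For $B\in\SL(2,\bR)$ acting on $\bbP$, $\log$ of the derivative at $[v]$ equals $-2\log(\norm{Bv}/\norm{v})$.
\end{enumerate}
Integrating the Jacobian term over $n$ steps against $\xi$ yields $-2\int\log\norm{A^{(n)}(\omega)v}/n\,d\xi$. This tends to at most $-2\lambda_2$ and at least $-2\lambda_1$, hence to $0$ when $\lambda_1=\lambda_2$. The integrability hypothesis $\int\log\max(\norm{A},\norm{A^{-1}})<\infty$ is exactly what makes these integrals finite and legitimises the limits.

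\textbf{Conclusion and main obstacle.} Combining the three steps, a sum of nonnegative entropy increments is bounded by $o(n)$ while being stationary. Hence each increment vanishes, which forces equality in Jensen at every step. That gives $\xi_\omega=\nu_\omega$ a.e., i.e.\ the disintegration is $\cA_0$-measurable mod null sets. The main obstacle is the third step: making the relation between entropy and Jacobian rigorous when $\xi_\omega$ may have atoms or be singular with respect to Lebesgue on $\bbP$. There $\mathrm{KL}$ can be infinite, and one cannot naively differentiate. I would first try to work with finite partitions of $\bbP$ and a Shannon-entropy version of the inequality, refining the partitions. Failing that, I would replace $\mathrm{KL}$ by Ledrappier's bounded functional $\int\log\frac{d\xi_\omega}{d(\xi_\omega+\nu_\omega)}d\xi_\omega$, whose strict concavity still detects $\xi\neq\nu$.
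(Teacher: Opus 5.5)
The paper gives no proof of this statement; it is quoted from Ledrappier. So I can only judge your argument on its own terms, and it has a genuine gap in two places.

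\textbf{The equivariance of $\nu$ is false in general, and is in fact equivalent to the conclusion.} Put $\xi^{(n)}:=\mathbf{E}[\xi_\cdot\mid\theta^n\cA_0]$, so that $\xi^{(0)}=\nu$ and $\xi^{(n)}\to\xi$ by martingale convergence. Three facts hold: $\theta$ is invertible and measure preserving, $A$ is $\cA_0\subset\theta^n\cA_0$-measurable, and $\xi$ is equivariant. Together they give
\[
\xi^{(n+1)}_{\theta\omega}=\mathbf{E}[A_*\xi_\cdot\mid\theta^{n}\cA_0](\omega)=A(\omega)_*\xi^{(n)}_\omega .
\]
For $n=0$ this reads $A(\omega)_*\nu_\omega=\xi^{(1)}_{\theta\omega}$. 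Hence $\nu$ is equivariant if and only if $\xi^{(1)}=\xi^{(0)}$. In that case the same identity gives, by induction, $\xi^{(n)}=\nu$ for all $n$, and therefore $\xi=\nu$. So your first step presupposes the theorem.

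It cannot follow from the measurability of $\theta$ and $A$ alone. Take a Bernoulli shift with $A(\omega)=A(\omega_0)$, where the matrices generate a non-compact strongly irreducible group. Let $\cA_0=\sigma(\omega_k:k\ge 0)$ and $\xi_\omega=\delta_{E^u(\omega)}$. Then $\nu_\omega$ is the constant Furstenberg stationary measure. It is not invariant under $A(\omega_0)$ for a positive-measure set of $\omega_0$.

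\textbf{The step where $\lambda_1=\lambda_2$ should enter is missing.} The quantity that really is stationary is the martingale increment. Since $\mathrm{KL}$ is unchanged when both arguments are pushed by the same matrix, the identity above shows that $\int\mathrm{KL}(\xi^{(k+1)}_\omega\|\xi^{(k)}_\omega)\,d\bP$ does not depend on $k$. It equals
\[
I:=\int\mathrm{KL}(A(\omega)_*\nu_\omega\|\nu_{\theta\omega})\,\bP(d\omega),
\]
so $\int\mathrm{KL}(\xi^{(n)}_\omega\|\nu_\omega)\,d\bP=nI$, and $I=0$ is exactly the desired conclusion. The theorem therefore amounts to showing that this quantity is $o(n)$ when $\lambda_1=\lambda_2$.

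Your third step does not do this. $I$ may be $+\infty$, for instance when the fibre measures have atoms or are mutually singular. The projective Jacobian $-2\log(\norm{Bv}/\norm{v})$ measures distortion of Lebesgue measure on $\bbP$, and gives no control over the divergence between singular measures. The bounded functional $\int\log\frac{d\xi_\omega}{d(\xi_\omega+\nu_\omega)}\,d\xi_\omega$ is not additive along the martingale, so by itself it does not reproduce the $nI$ bookkeeping.

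What is needed is a finite-resolution, Ruelle-type estimate. After your normalization $|\det A|=1$, the projective action of $A^{(n)}(\omega)$ is $\norm{A^{(n)}(\omega)}^2$-Lipschitz. Fix a partition of $\bbP$ into arcs of length $\delta$. Given the $\delta$-arc containing $v$ and the $\cA_0$-measurable matrix $A^{(n)}(\omega)$, the $\delta$-arc containing $A^{(n)}(\omega)v$ then has at most $\norm{A^{(n)}(\omega)}^2+2$ possibilities. This contributes at most $\int\log(\norm{A^{(n)}}^2+2)\,d\bP=o(n)$ of conditional Shannon entropy. One then needs a limiting argument as $\delta\to 0$ that turns this into $I=0$. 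That argument is the real content of Ledrappier's theorem, and your proposal leaves it as an unresolved ``obstacle''.
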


\section{Lyapunov exponents of a stationary sequence of matrices along a Markov process}

In this section, we first summarize some essential concepts and notations related to Markov processes that will be used throughout the paper. For a more detailed exposition, we refer the reader to standard texts such as \cite{walters}, \cite{williams}, \cite{mt}, \cite{durrett}, \cite{sinaiprob}.

Let $M$ be a complete separable metric space with Borel $\sigma$-algebra $\cB$. Let   $P:M\times\cB \to [0,1]$ be a Markov transition function on $M$.
The Ruelle transfer operator $\calL$ associated with $P$ is defined on the set $\cP(M)$ of probability measures on $M$ by
\begin{equation}
\cL\mu(B) = \int_M P(x,B)\mu(dx),
\end{equation}
for any $\mu \in \cP(M)$ and $B\in \cB$.

For any $x\in M$, let $\delta_x$ denote the Dirac measure at $x$. The one-step image of $\delta_x$ under $\calL$ is given by $\cL\delta_x(B) = P(x,B)$ for every $B\in \cB$. We can define the $n$-step transition function $P^n$ by
\begin{equation}
  P^n(x,B) = \cL^n\delta_x(B),
\end{equation}
for any $x\in M$ and $B\in \cB$. One can check that the $n$-step transition function satisfies the recursion
\begin{equation}
P^n(x,B) = \int_M P^{n-1}(y,B)P(x,dy)=  \int_M \cL^{n-1}\delta_y(B) \,P(x,dy),
\end{equation}
for any $x\in M$ and $B\in \cB$.

Given a transition function $P$ on $M$ and an initial probability measure $\mu_0$, we can construct a Markov process $\{X_n\}_{n\geq 0}$ on $M$ such that the distribution of $X_{n+k}$ given $X_n = x$ is given by $P^k(x,\cdot)$.
It is known that if $\{X_n\}_{n\geq 0}$ is a Markov process with transition function $P$, then
\[
  \bP(X_{k} \in B \mid X_0 = x) = P^k(x, B),	
\]
for any $k \geq 1$ and $B\in \cB$.

We say a probability measure $\mu \in \cP(M)$ is invariant with respect to $P$ if $\cL\mu = \mu$, i.e.,
\begin{equation}
	\mu(B) = \int_M P(x,B)\mu(dx),
\end{equation}
for all $B \in \cB$. Moreover, 
an invariant measure $\mu \in \cP(M)$ is called ergodic with respect to $P$ if, for any $A \in \cA$, $\theta^{-1}(A) = A$ implies $\bP_{\mu}(A)$ is either 0 or 1. The measure $\mu$ is mixing with respect to $P$ if for any $A, B \in \cA$,
\begin{equation}
\bP_{\mu}(\theta^{-n}A \cap B) \to \bP_{\mu}(A)\bP_{\mu}(B),
\end{equation}
as $n \to \infty$.

Consider a Markov process $(X_n)_{n\geq 0}$ with transition function $P$ and take values in $M$ with initial probability measure $\mu$ that is $\calL$-invariant. Let $(\Omega, \cA, \bP_{\mu})$ be the canonical underlying probability space for $(X_n)_{n\geq 0}$ constructed as in Section 2. Recall that an element $\omega \in \Omega$ is of the form $\omega = (x_0, x_1, \dots)$ and the random variables $X_n$'s are coordinate maps:
\[
  X_n(\omega) = x_n \text{ for } n \geq 0.
\]
The shift map $\theta$ on $\Omega$ is:
\[
  \theta(x_0, x_1, \dots) = (x_1, x_2, \dots).
\]
Since $\mu$ is invariant for $\cL$, the shift map $\theta$ preserves the measure $\bP_{\mu}$ and thus $(X_n)_{n\geq 0}$ is a stationary Markov process.

Let $A: M \to \SL(2,\bR)$ be a measurable map satisfying the condition:
\begin{flalign}
 {\mathbf{(H1)}}\hspace{0.5cm} \int_{M}\log \max(\norm{A(x)}, \norm{A(x)^{-1}}) \mu(dx) < \infty,
  \label{ca1}
\end{flalign}

With a slight abuse of notation, we define a function $A: \Omega \to \SL(2,\bR)$ by setting:
\[
  A(\omega) := A(x_0)
\]
for any $\omega = (x_0, x_1, \dots) \in \Omega$.

Let $\lambda_1 \geq\lambda_2$ be the Lyapunov exponents for the process $(A_n = A\circ \theta^n)_{n \geq0}$. By corollary \ref{sumlya0} we know that $\lambda_1 + \lambda_2 = 0$.

\begin{lemma}[\cite{led2} Corollary 2]\label{lambda1=0}


If  $\lambda_1 = 0$ then there exists a measurable family $\{\xi_x: x\in M\}$ of probability measures on the real projective line $\bbP$ such that for $\mu$-almost every $x\in M$:
\begin{equation}
	\xi_y = (A(x))_*\xi_x
\end{equation}
for $P(x, .)$-almost every $y$.
\end{lemma}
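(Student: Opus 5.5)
The plan is to derive this from Ledrappier's Theorem \ref{led}, applied to the canonical path space $(\Omega,\cA,\bP_\mu)$ with the shift $\theta$ and $A(\omega)=A(x_0)$. Condition (H1) gives exactly the integrability hypothesis of Theorem \ref{led}. By Corollary \ref{sumlya0} (or directly from $\det A=1$) we have $\lambda_1+\lambda_2=0$, so $\lambda_1=0$ forces $\lambda_1=\lambda_2$.

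\textbf{Step 1: the sub $\sigma$-algebra.} Take $\cA_0$ to be the $\sigma$-algebra generated by the whole future $(X_0,X_1,\dots)$. This choice is not useful, because it is all of $\cA$. A cleaner choice is to pass first to the two-sided (natural) extension of the stationary Markov chain, with $\Omega=M^{\bZ}$ and $\theta$ the invertible shift. There we set $\cA_0=\sigma(X_n: n\le 0)$, the past together with the present. Then $A$ is $\cA_0$-measurable, and the iterates $\theta^{n}\cA_0$ generate $\cA$. The condition in Theorem \ref{led} that $\theta$ be $\cA_0$-measurable should be read as $\theta^{-1}\cA_0\subset\cA_0$, which holds for the past $\sigma$-algebra.

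\textbf{Step 2: an invariant measure.} Since $\bbP$ is compact, a $\htheta$-invariant measure $\xi$ projecting to $\bP_\mu$ exists. One way to get it is to take Cesàro averages of $\bP_\mu\times m$, with $m$ Lebesgue on $\bbP$, and extract a weak-$*$ limit in the relative sense. Theorem \ref{led} then says the disintegration $\omega\mapsto\xi_\omega$ is $\cA_0$-measurable.

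\textbf{Step 3: removing the past.} Invariance reads $\xi_{\theta\omega}=A(x_0)_*\xi_\omega$. Choosing the opposite sub-$\sigma$-algebra is awkward here, so instead we use the Markov property. For the one-sided chain, let $\cA_0$ be generated by $X_0$ and its preimages under the shift, and consider conditioning. Define
\[
\xi_x := \mathbb{E}\bigl[\xi_\omega \mid X_0=x\bigr].
\]
Conditional expectation of measure-valued maps is well defined because $\bbP$ is compact metric: test against a countable dense set of continuous functions.

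\textbf{Step 4: the key point and main obstacle.} The heart of the argument is showing that $\xi_\omega$ is actually $\sigma(X_0)$-measurable, not merely past-measurable. Once that holds, the relation $\xi_{\theta\omega}=A(x_0)_*\xi_\omega$ becomes $\xi_{x_1}=A(x_0)_*\xi_{x_0}$ almost surely. Since $x_1$ given $x_0$ has law $P(x_0,\cdot)$, this is exactly the claim.

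To get measurability with respect to $X_0$, we combine two facts about the two-sided chain. First, applying Theorem \ref{led} to the inverse system, whose exponents are also equal, gives a second disintegration that is measurable with respect to the future $\sigma(X_n:n\ge0)$. Second, by uniqueness of the disintegration for the same invariant measure, the two must coincide. This step requires the invariant measure to be the same; one can arrange that by starting from a measure that is invariant for both directions, for instance via extreme points, which is where care is needed.

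A map measurable with respect to both the past $\sigma(X_n:n\le0)$ and the future $\sigma(X_n:n\ge0)$ is, by the Markov property (past and future are conditionally independent given $X_0$), almost surely $\sigma(X_0)$-measurable. This conditional-independence argument completes the proof and gives the measurable family $\{\xi_x\}$. Matching the two invariant measures under time reversal is where I expect the main difficulty. An alternative route is to cite Ledrappier's Corollary 2 directly, as the paper does, for the precise handling of this point.
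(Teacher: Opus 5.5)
The paper gives no argument for this lemma; it simply quotes Ledrappier (Corollary 2 of \cite{led2}). Your plan is a sound way to derive that corollary from Theorem \ref{led}. You pass to the two-sided stationary chain, apply the theorem to $(\theta, A(x_0))$ and to the inverse system, and conclude that the disintegration of one invariant measure $\xi$ is both past- and future-measurable. Conditional independence of past and future given $X_0$ then makes it a function of $x_0$. Your final step, reading off $\xi_{x_1}=A(x_0)_*\xi_{x_0}$ a.s. and disintegrating along $\mu(dx)P(x,dy)$, is also correct. However, the way you verify the hypotheses of Theorem \ref{led} is wrong as written.

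With $(\theta\omega)_n=\omega_{n+1}$ we have $X_n\circ\theta=X_{n+1}$. Hence $\theta^{-1}\sigma(X_n:n\le 0)=\sigma(X_n:n\le 1)$, which is not contained in $\sigma(X_n:n\le 0)$. So under your own reading of ``$\theta$ is $\cA_0$-measurable'', the past $\sigma$-algebra is not admissible for the forward system.

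The admissible choice for the forward system is the future $\mathcal{F}_0=\sigma(X_n:n\ge 0)$. Indeed $\theta^{-1}\mathcal{F}_0=\sigma(X_n:n\ge 1)\subset\mathcal{F}_0$, $A(x_0)$ is $\mathcal{F}_0$-measurable, and the iterates $\theta^{k}\mathcal{F}_0=\sigma(X_n:n\ge -k)$ generate $\cA$.

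For the inverse system $(\theta^{-1},\ \omega\mapsto A(x_{-1})^{-1})$, the matrix is not even $\mathcal{F}_0$-measurable, so the future cannot be used there. The admissible $\sigma$-algebra is the strict past $\sigma(X_n:n\le -1)$.

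So the forward application gives future-measurability and the inverse one gives past-measurability, the reverse of what you state. Since you use both, the conclusion survives once the labels are swapped. Because $\sigma(X_n:n\le-1)\subset\sigma(X_n:n\le 0)$, your Markov-property step then goes through unchanged.

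The point you flag as the main obstacle is not one. The skew product of the inverse system is exactly $\htheta^{-1}(\omega,v)=(\theta^{-1}\omega,A(x_{-1})^{-1}v)$, so it has the same invariant measures as $\htheta$. Its exponents are $-\lambda_2\ge-\lambda_1$, which are equal when $\lambda_1=\lambda_2$. Theorem \ref{led} applies to every invariant measure, so the single $\xi$ from your Step 2 gets both measurability properties directly. No extreme-point argument or matching of measures under time reversal is needed, and your Step 3 (one-sided chain, conditional expectation) is superfluous.

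Compared with the paper's bare citation, the corrected argument makes explicit the two ingredients that turn the path-space statement of Theorem \ref{led} into a family indexed by $M$. These are applying the theorem in both time directions and using the Markov property.
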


\section{Ergodicity and hyperbolicity of randomly perturbed billiards}

In this section, we consider a random perturbation to certain dynamical billiards and prove the ergodicity and hyperbolicity of the perturbed systems. Let us first recall some basic information about billiards. There are several introductory references to billiards including, but not limited to: \cite{cb}, \cite{kt}, \cite{tb}.

We call a billiard table $\cD$ {\it classical} if it satisfies the following assumption:

Assumption $\mathbf{(HB)}$: the interior $\cD_0$ is a compact and connected open domain in $\bR^2$, and the boundary $\partial \calD$ consists of finitely many piece-wise $C^3$ simple closed curves:
\begin{equation}
\partial \cD = \Gamma_1 \cup \Gamma_2 \dots \cup \Gamma_n,\hspace{0.5cm} \text{ , } n\geq 1.
\end{equation}
Each curve $\Gamma_i$ is given by a piecewise $C^3$ map $\gamma_i: [a_i, b_i] \to \bR^2$, which is injective on $[a_i, b_i)$ and satisfies $\gamma_i(a_i) = \gamma_i(b_i)$. Assume further that  $\gamma_i((a_i,b_i))$, $i = 1, \dots, n$, are disjoint.

Fix an orientation on each component $\Gamma_i$ so that the interior of the table lies on the left-hand side of $\Gamma_i$. We parametrised the $\Gamma_i$'s by their arclengths.

A point particle is moving inside $\cD$ and colliding with the boundary $\partial \cD$. Let $q(t)\in \cD$ be the position and $v(t)\in \bR^2$ the velocity of the particle at time $t\in \bR$. Between two collisions with the boundary, $q\in \cD_0$, the particle moves in the interior with constant velocity. At a collision with the smmooth part of the boundary, $q \in \partial\cD$, let $v^-$ and $v^+$ denote the pre-collisional and post-colllisional velocity vectors, respectively, and let $n$ be the unit normal vector to the boundary at $q$ pointing inward the table. Then we have:
\begin{equation}
v^+ = v^- -2(v^-,n)n.
\end{equation}

\begin{figure}[h]

\centering
\includegraphics[width=0.5\textwidth]{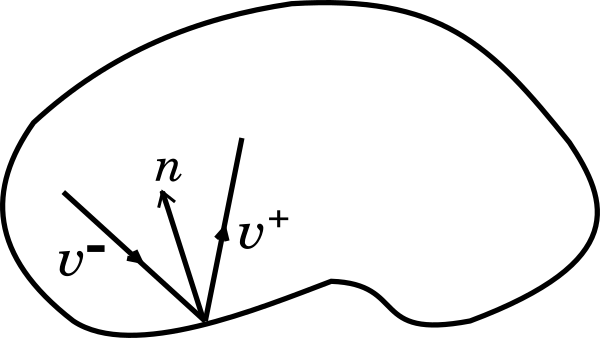}
\caption{Example of a collision in a billiard table}
\end{figure}

Let $\calM$ be the {\it collision space} of the billiard map on $\calD$. Every point $x\in \cM$ is a pair of its position $q$ and post-collisional velocity vector $v$. The boundary $\partial\calD$ is parametrised by the arc-length parameter $r$ in the chosen direction. For each point $x\in \cM$, the angle of reflection $\varphi$ is the directional angle from $v$ to the inward normal vector $n$. Note that $-\pi/2\leq \varphi\leq \pi/2$. Thus we have a coordinate system $r,\varphi$ on $\calM$.

For each $i = 1, 2,\dots,n$, let $\calM_i$ be the collision space for collisions that happen on $\Gamma_i$, then:
\begin{equation}
\cM = \cM_1\cup \cdots \cup \cM_n.
\end{equation}
For each $\Gamma_i$, since it is parametrised by arclength, we assume that it has length $|\Gamma_i| = b_i - a_i$. Let $\cR_i = [a_i, b_i]\times[-\pi/2, \pi/2]$. Then $\cM_i$ is a cylinder obtained by identify the two edges $\{r = a_i\}$ and $\{r = b_i\}$ of $\cR_i$ with each other.

Let $\calF:\calM \to \calM$ be the billiard map. It sends a point $(r,\varphi)\in \calM$ to $(r_1,\varphi_1)\in \calM$ at the next collision. Let $|\partial \cD|$ be the length of $\partial\calD$. By Lemma 2.35 in \cite{cb}, the collision map $\calF: \calM \to \calM$ preserves a probability measure $\mu$ on $\calM$ defined by:
\begin{equation}
    d\mu  = \frac{1}{2|\partial \cD| }\cos(\varphi)drd\varphi.  
\end{equation}

In this section, however, we will use the coordinate system given by $r$ and $s$, where $s = \sin(\varphi)$. For each $i = 1, 2, \dots, n$, let $R_i = [a_i, b_i] \times [-1,1]$ and $M_i$ the cylinder obtained by identify two edges $\{r = a_i\}$ and $\{r = b_i\}$ of the rectangle $R_i$ with each other. The collision space in this setting is $M = M_1\cup \cdots \cup M_n$ and the billiard map is now denoted by $F: M\to M$.

Let $S_1$ be the set of $x\in M$ such that the corresponding trajectory on the billiard table will hit the corners, or tangential to a dispersing wall. The billiard map $F$ is a $C^2$ diffeomorphism from $M\setminus S_1$ onto its image and $S_1$ is considered as the {\it singularity} of $F$.

Let $x=(r,\sin(\varphi))$ be any point in $M$ and $x_1 = F(x)=(r_1,\sin(\varphi_1))$ be the next collision, where $(r_1, \varphi_1) = \calF(r,\varphi)$. We denote by $K$ and $K_1$ the curvatures of the boundary at the collision points for $x$ and $x_1$, respectively, and by $\tau$ the distance of between those 2 collision points in the table. The differential of the billiard map, in the coordinates $r$ and $s= \sin(\varphi)$, is given by the formula:
\begin{equation}
DF(x) = 
	\begin{pmatrix}
		1 & 0\\
		0 & \cos(\varphi_1)\\
	\end{pmatrix}
\frac{-1}{\cos(\varphi_1)}\begin{pmatrix}
-\tau K + \cos(\varphi) & \tau\\
\tau KK_1 - K\cos(\varphi_1) - K_1\cos(\varphi)&-\tau K_1 +\cos(\varphi_1)
\end{pmatrix}
	\begin{pmatrix}
		1 & 0\\
		0 & \frac{1}{\cos(\varphi)}
	\end{pmatrix}.
\end{equation}

Note that $\det(DF(x)) = 1$ and thus the billiard map preserves the multiple of the Lebesgue measure $dm = \frac{1}{2|\partial \calD|}drds$ on $M$. We are interested in the Lyapunov exponents of billiard map $F$. By Oseledets's theorem, we know that the Lyapunov exponents exist at $m$-almost every point $x\in M$.

\begin{theorem} [\cite{cb} Theorem 3.1]
Let $M$ be a 2-dimensional compact Riemannian manifold and $F: M\to M$ a $C^2$ diffeomorphism preserving a Borel probability measure $m$ on $M$. Suppose that
\begin{equation}
\int_M\log^+\|DF(x)\|m(dx)< \infty \text{ and } \int_M\log^+\|DF^{-1}(x)\|m(dx)< \infty,
\end{equation}
where $\log^+ = \max\{\log,0\}$. Then there exists an $F$-invariant set $H\subset M$ of full measure, on which all iterations of $F$ are defined on $H$ such that for each $x\in H$ there is a $DF$-invariant decomposition of the tangent space:
\begin{equation}
T_xM = E_1(x)\oplus\cdots\oplus E_k(x)
\end{equation}
for some $k=k(x)$, such that for each non-zero vector $v\in E_i(x)$ the following limit exists:
\begin{equation}
\lim_{n\to \infty}\frac{1}{n}\log\|DF^n(x)v\| = \lambda_i(x)
\end{equation}
where $\lambda_1(x) > \cdots > \lambda_k(x).$
\end{theorem}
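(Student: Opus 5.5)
This is the multiplicative ergodic theorem in dimension two, so I will derive it from the stationary-matrix framework of Section 2. I will apply that framework to the probability space $(M,\cB,m)$, the measure-preserving map $\theta=F$, and the cocycle $A(x)=DF(x)$, written in a fixed measurable trivialization of $TM$. The integrability hypotheses on $\log^+\|DF\|$ and $\log^+\|DF^{-1}\|$ are exactly what the earlier results require.

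By Lemma \ref{lyaintro} and Theorem \ref{lyaergodic}, the following limits exist and are finite for $m$-a.e. $x$, without assuming ergodicity:
\[
\lambda_1(x)=\lim_{n\to\infty}\frac1n\log\|DF^n(x)\|, \qquad \lambda_1(x)+\lambda_2(x)=\lim_{n\to\infty}\frac1n\log|\det DF^n(x)|.
\]
In the non-ergodic case I will pass to the ergodic decomposition, or apply Kingman's subadditive theorem directly. Finiteness uses the bound $\log\|DF^{-n}\|\ge -\log\|DF^n\|$ together with the integrability of both tails. Let $H$ be the intersection over $k\in\bZ$ of $F^{k}$ applied to this full-measure set, further intersected with the set where all iterates avoid $S_1$. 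Then $H$ is $F$-invariant and still has full measure, because $m$ is invariant and $S_1$ is null.

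If $\lambda_1(x)=\lambda_2(x)$, take $k=1$ and $E_1=T_xM$. Every unit vector $v$ satisfies $\sigma_2(DF^n)\le\|DF^nv\|\le\sigma_1(DF^n)$. Since $\frac1n\log\sigma_2=\frac1n\log|\det|-\frac1n\log\sigma_1\to\lambda_2$, squeezing gives the limit $\lambda_1$ for every $v$.

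If $\lambda_1(x)>\lambda_2(x)$, I will construct the slow direction $E_2(x)$ as the limit of the most contracted singular directions $v_n$ of $DF^n(x)$, i.e. the second column of $V$ in the singular value decomposition. The standard estimate is
\[
\angle(v_n,v_{n+1})\lesssim \frac{\sigma_2(DF^{n})}{\sigma_1(DF^{n})}\,\|DF(F^nx)\|\,\|DF(F^nx)^{-1}\|.
\]
By Borel--Cantelli and integrability, $\frac1n\log\|DF^{\pm1}(F^nx)\|\to0$ a.s., so the right-hand side decays like $e^{-n(\lambda_1-\lambda_2-\epsilon)}$. Hence $v_n$ is Cauchy and converges to some $v_\infty$. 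Splitting $DF^n v_\infty$ along the singular frame then gives growth rate $\lambda_2$ for vectors in $E_2(x)=[v_\infty]$, and growth rate $\lambda_1$ for every vector outside it.

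For $E_1(x)$ I will run the same construction for $F^{-1}$, whose exponents are $-\lambda_2>-\lambda_1$. Its slow direction is a line $E_1(x)$ on which forward growth is $\lambda_1$. I will then show $E_1(x)\ne E_2(x)$, and that both lines are $DF$-invariant: $DF(x)E_i(x)=E_i(Fx)$ holds because each line is characterized by its growth rate.

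The main obstacle is the transversality $E_1\neq E_2$, which requires temperedness of the angle between the two lines along orbits. My first approach will be to argue via the determinant: if the two lines coincided, then $|\det DF^n|$ would be of order $e^{n(\lambda_1+\lambda_2)}$ only if the area spanned by $E_1$ and a transverse vector grew at rate $\lambda_1+\lambda_2$. That conflicts with the backward growth rates unless the angle is subexponentially small. A Poincaré recurrence argument on the set where the angle is at least $\delta$ then rules out the degenerate case.
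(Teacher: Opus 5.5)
The paper gives no proof of this statement; it quotes Oseledets' theorem from Chernov--Markarian. Your outline therefore has to stand on its own as the standard singular-value proof, and up to the construction of the forward slow line $E_2(x)$ it does. Two small points need a line each. Theorem \ref{lyaergodic} assumes ergodicity, which you handle via Kingman or the ergodic decomposition. And the exponents of $F^{-1}$ being $-\lambda_2>-\lambda_1$ is not automatic pointwise: $\|DF^{-n}(x)\|=\sigma_2(DF^n(F^{-n}x))^{-1}$ has a moving base point, so you need invariance of $m$ on invariant sets, or the ergodic decomposition.

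The genuine gap is the step you flag yourself, $E_1(x)\neq E_2(x)$. This statement needs only transversality a.e., not temperedness of angles, but the argument you sketch cannot deliver it. Area and determinant bookkeeping at a single point is always consistent with $E_1=E_2$. Take the two-sided sequence $A_n=\mathrm{diag}(2,\tfrac12)$ for $n\ge 0$ and $A_n=\mathrm{diag}(\tfrac12,2)$ for $n<0$. The forward slow line and the backward slow line are both $[e_2]$, with forward rates $\pm\log 2$, backward rate $-\lambda_1$ on $[e_2]$, and determinant $1$. So transversality must come from invariance of $m$, i.e.\ from comparing data at different points of an orbit. Recurrence to the set where the angle is at least $\delta$ is the wrong set: on the putative bad set $G=\{\lambda_1>\lambda_2,\ E_1=E_2\}$ the angle is identically zero, so that set never meets $G$.

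A clean fix: $G$ is $F$-invariant because both line fields are. On $G$ let $L(x)$ be the common line and set $\phi(x)=\log\|DF(x)|_{L(x)}\|$. Then $|\phi|\le\log^+\|DF(x)\|+\log^+\|DF(x)^{-1}\|$, so $\phi$ is $m$-integrable, and by the chain rule along the invariant line field
\[
\frac1n\sum_{k=0}^{n-1}\phi(F^kx)=\frac1n\log\|DF^n(x)|_{L(x)}\|\to\lambda_2(x),\qquad \frac1n\sum_{k=1}^{n}\phi(F^{-k}x)=-\frac1n\log\|DF^{-n}(x)|_{L(x)}\|\to\lambda_1(x).
\]
Forward and backward Birkhoff averages of an integrable function agree a.e., since both equal the conditional expectation on the invariant $\sigma$-algebra. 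Hence $\lambda_1=\lambda_2$ a.e.\ on $G$, so $m(G)=0$. Equivalently, use Egorov to get a positive-measure set on which the forward rate along $E_2$ is uniform, and apply Poincar\'e recurrence of $F^{-n}x$ to that set. With transversality in hand, invariance of $E_1,E_2$ and the forward rate $\lambda_1$ on $E_1$ follow exactly as you say.
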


 The Lyapunov exponents tell us how nearby trajectories will be separated from each other as the system evolves in time. In our case, $k(x)$ is either 1 or 2. Let $\lambda_1(x) \geq \lambda_2(x)$ be its Lyapunov exponents then by Lemma 3.9 in \cite{cb}: $\lambda_1(x) + \lambda_2(x) = 0$. A point $x\in M$ is hyperbolic if $\lambda_1(x) >0$:  nearby trajectories are separated exponentially fast in the future. There are many billiards in which the Lyapunov exponents are zero. For example, the Lyapunov exponents for any circular billiard are 0 at all points: the trajectories are separated at most linearly. Other similar examples are elliptic billiards. In each of these billiard models, the systems are completely integrable and their collision spaces are foliated by invariant curves.

In what follows, we are going to add some noise each time there is a collision. In the deterministic setting, a point $x$ is mapped to $F(x)$. With the noise added, now the image of $x$ could be in some open neighbourhood of $F(x)$. In this way, a point can escape a region with slow or no expansion even if it needs many iterations depending on the added noise is.


Fix an $\epsilon > 0$. We denote by $B_{\varepsilon}(x)$ the ball of radius $\varepsilon$ and centred at a point $x\in \bR^2$.
Consider a probability measure $\nu_{\varepsilon}$ on $\bR^2$ such that $d\nu_{\varepsilon} =\rho d\bm$ for some measurable density function $\rho$, here $\bm$ is the Lebesgue measure on $\bR^2$. Suppose that the support of $\rho$ contains $B_{\varepsilon}(0,0)$.

Recall that each cylinder $M_i$ is obtained by taking the rectangle $R_i$ and identifying the two vertical edges. The system is randomly perturbed as follows: take a point $x = (r,s) \in M_i$, then perturb this point to another point in $M_i$ with a distribution law given by:
\begin{equation}\label{perturbdef}
\begin{aligned}
    \eta_{\varepsilon}^x(B) &= \nu_{\varepsilon}\left\{u\in \bR^2: u + x \in B \mod{\bZ\partial R_i}\right\}\\
    &=C \int_{B}  \sum_{v\in \bZ\partial R_i}\rho(u-x - v) \bm(du)
\end{aligned}
\end{equation}
for any measurable set $B\subset M_i$. The set $\bZ\partial R_i$ consists of vectors $v$ such that $\frac{1}{n}v \in \partial R_i$ for some $n\in \bZ$; the constant $C$ is the normalising constant. 


\begin{example}
Let $\rho = \frac{1}{\pi\varepsilon^2}$ on $B_{\varepsilon}(0,0)$ and  = 0 elsewhere. A point $x$ goes to $F(x)$ and then jump randomly to a point in a disc of radius $\varepsilon$ centred at $F(x)$ following the distribution $\eta_{\varepsilon}^{F(x)}$. If any part of the disc lies above or below $M_i$ then this part will cut and translated back to $M_i$ by a constant in $\bZ\partial R_i$.
\end{example}

\begin{figure}[h]

\centering
\includegraphics[width=0.5\textwidth]{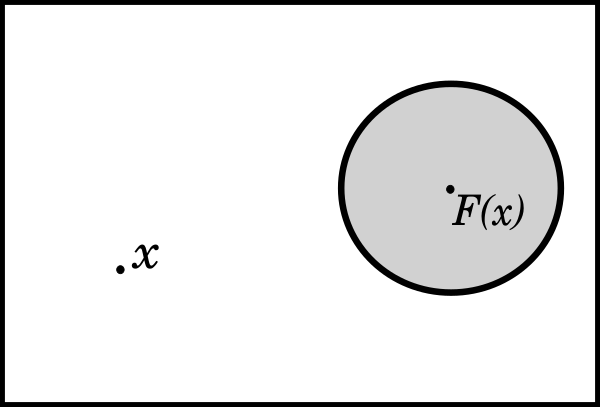}
\caption{Random perturbation when $F(x)$ is far from the boundary}
\end{figure}

\begin{figure}[h]

\centering
\includegraphics[width=0.5\textwidth]{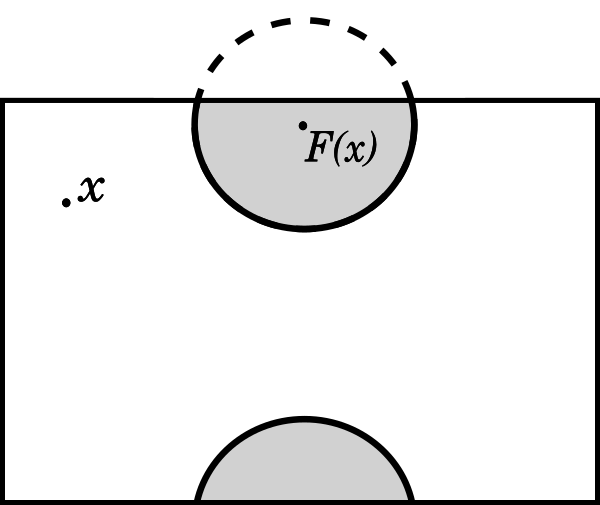}
\caption{Random perturbation when $F(x)$ is near the boundary}
\end{figure}



\begin{definition}\label{randombilmap}
Given a vector $u \in \bR^2$,  we define a function $F_{u}: M \to M$ by:
\begin{equation}
F_{u}(x) = F(x)+ u.
\end{equation}
The function $F_{u}$ is called the perturbed billiard map with $u$.
\end{definition}

For any sequence of vectors $\uu = (u_0, u_1, \dots)$, the compositions of perturbed map with noise given by $\uu$ is defined by:
\begin{equation}
F^n_{\uu} = F_{u_{n-1}}\circ F_{u_{n-2}} \circ \cdot \circ F_{u_0}
\end{equation}
for any $n\geq 1$.

Let $\Upsilon = (\bR^2)^{\bN}$ be the space of all sequences of vectors in $\bR^2$, equipped with the probability measure $\nu_{\varepsilon}^{\bN}$. This is our sample space for the noise. Let $U_n$ be the coordinate mapping:
\begin{equation}
U_n(\uu) = U_n(u_0,u_1,\dots) = u_n
\end{equation}
for any $\uu = (u_0, u_1,\dots)\in \Upsilon$ and $n\geq 0$.

 It is easy to see that the process $\uU = (U_n)_{n \geq 0}$ is a sequence of independent and identically distributed random variables taking values in $\bR^2$ with distribution given by the probability measure $\nu_{\varepsilon}$.
 
 We consider a sequence of random variables $(X_n)_{n\geq 0}$ defined on $\Upsilon$ with values in $M$ such that:
 \begin{itemize}
 \item $X_0$ is a random variable with values in $M$ and some distribution $\mu_0$,
 \item for each $n\geq 1$, $X_n$ is defined by the recurrence relation:
\begin{equation}
	X_n(\uu) = F(X_{n-1}(\uu)) + U_{n-1}(\uu).
	\label{markovchain}
\end{equation}
\end{itemize}
\begin{remark}
If $\varepsilon = 0$ and there is no perturbation, then $\nu_{\varepsilon} = \delta_{(0,0)}$ and $\nu^{\bN}_{\varepsilon}$ is the Dirac measure at the sequence $\underline{0}$ of zero vectors. In this case, the recurrence relation (\ref{markovchain}) is the deterministic billiard map and we obtain a trajectory in the phase space given by the unperturbed billiard map, starting from some point $X_0(\underline{0}) \in M$.
\end{remark}

Let $\uu = (u_0, u_1, \dots ) \in \Upsilon$ be a realisation of the process $\uU$. Let $x_n = X_n(\uu)$. We have:
 \begin{itemize}
 \item $X_0(\uu) = x_0$ is some point in $M$
 \item for each $n\geq 1$ we have:
\begin{equation}
\begin{aligned}
	x_n &= F(x_{n-1}) + u_{n-1} = F_{u_{n-1}}(x_{n-1})\\
	&=F^n_{\uu}(x_0)
	\end{aligned}
\end{equation}
\end{itemize}
 
One can check using the definition of the perturbation in (\ref{perturbdef}) that if we define 
$P: M\times \cB \to [0,1]$ as
\begin{equation}\label{markovkernel}
P(x,B) = \eta_{\varepsilon}^{F(x)}(B)
\end{equation}
for $m$-almost every $x\in M$ and $B\in \cB$.
Then $P$ is a Markov transition function on $M$.

\begin{lemma}\label{muinv}
For any vector $u \in \bR^2$, the map $F_u$ preserves the measure $m$.
Moreover, $m$ is an invariant measure with respect to the Markov transition function $P$.
\end{lemma}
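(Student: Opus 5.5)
The plan is to prove the two claims in order, with the second following from the first by integrating over the noise.

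\emph{Step 1: $F_u$ preserves $m$.} We write $F_u = T_u\circ F$, where $T_u(x) = x+u \bmod \bZ\partial R_i$ is the translation on each cylinder $M_i$. The billiard map $F$ preserves $m$, since $\det DF = 1$ in the $(r,s)$ coordinates and $dm = \frac{1}{2|\partial\calD|}drds$. It is a bijection off the null singular set $S_1$, so $m(F^{-1}B) = m(B)$.

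For $T_u$, we use that $m$ restricted to each $M_i$ is a multiple of Lebesgue measure on the rectangle $R_i$ modulo the lattice $\bZ\partial R_i$. The rectangle $R_i$ is a fundamental domain for this lattice, with the reduction understood in both the $r$ and $s$ directions as in (\ref{perturbdef}). Translation by $u$ followed by reduction modulo the lattice is a measure-preserving bijection of the quotient torus. This gives $m(T_u^{-1}B) = m(B)$, and composing yields $m(F_u^{-1}B) = m(B)$.

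\emph{Step 2: $m$ is $P$-invariant.} By (\ref{markovkernel}) and (\ref{perturbdef}), for $B\subset M_i$ we have
\[
P(x,B) = \nu_\varepsilon\{u : F(x)+u \in B \bmod \bZ\partial R_i\} = \int_{\bR^2} \b1_B(F_u(x))\,\nu_\varepsilon(du).
\]
We then compute $\int_M P(x,B)\,m(dx)$ and apply Fubini (Tonelli), which is justified since the integrand is a nonnegative jointly measurable indicator:
\[
\int_M P(x,B)\,m(dx) = \int_{\bR^2} m(F_u^{-1}B)\,\nu_\varepsilon(du) = \int_{\bR^2} m(B)\,\nu_\varepsilon(du) = m(B).
\]
The middle equality is Step 1. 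This is exactly $\cL m = m$.

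\emph{Expected obstacle.} The main point of care is the bookkeeping of the perturbation across components and boundaries. When the translated point leaves $R_i$ in the $s$-direction, it is wrapped back by the lattice, so $T_u$ acts as a torus translation and not merely a cylinder translation. I must check that this wrap makes $T_u$ a bijection of $M_i$ up to null sets, that points stay in the same $M_i$ as $F(x)$, and that the normalizing constant $C$ equals $1$ in the form above, since $\rho$ is a probability density summed over a complete set of lattice translates. With this interpretation of (\ref{perturbdef}), both claims reduce to the invariance of Lebesgue measure under $F$ and under torus translations.
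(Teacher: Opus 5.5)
Your proposal is correct and follows essentially the same route as the paper. Both arguments get $F_u$-invariance of $m$ from the $F$-invariance and translation invariance of $m$, then write $P(x,B)=\int_{\bR^2} \b1_B(F_u(x))\,\nu_\varepsilon(du)$ and swap the order of integration to obtain $\cL m = m$. Your extra care about the wrap in the $s$-direction (so that $T_u$ is a genuine torus translation on each $M_i$), the null singular set, the value of $C$, and the Tonelli justification just spells out what the paper leaves implicit.
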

\begin{proof}
The map $F_u$ is the composite of the billiard map $F$ with the translation by $u$. The measure $m$ is $F$-invariant and also translation-invariant, therefore it is $F_u$-invariant. Moreover, $m$ is an invariant measure with respect to the Markov transition function $P$.
For every $B\in \cB$ we have:
\begin{align*}
\cL m(B) &= \int_M P(x,B)m(dx)\\
        &= \int_M \nu_{\varepsilon}(u: F(x) + u \in B) m(dx)\\
        &= \int_M\int_{\bR^2}1_{\{F_{u}(x) \in B\}} \nu_{\varepsilon}(du)m(dx)\\
        &= \int_{\bR^2}\int_M 1_{\{F_{u}(x) \in B\}} m(dx) \nu_{\varepsilon}(du)\\
        &= \int_{\bR^2}m(B)\nu_{\varepsilon}(du)\\
        &= m(B).
\end{align*}
Therefore we have $\cL m = m$.
\end{proof}

Next we state and prove one of the main results for this paper. 
\begin{theorem}\label{ergodicity}
Let $\calD$ be a classical billiard such that the table's boundary satisfies Assumption $\mathbf{(HB)}$. Consider a random perturbation to the system as described in  (\ref{perturbdef}). Then the resulting random billiard is ergodic.
\end{theorem}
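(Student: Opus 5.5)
We will show that the invariant measure $m$ from Lemma \ref{muinv} is ergodic for the Markov transition function $P$ in (\ref{markovkernel}), that is, that the shift $\theta$ on the canonical path space $(\Omega,\cA,\bP_m)$ is ergodic. The standard reduction is this: $\bP_m$ is ergodic as soon as every bounded $P$-harmonic function is $m$-a.e.\ constant. Equivalently, every measurable set $B\subset M$ with $P(x,B)=1_B(x)$ for $m$-a.e.\ $x$ must have $m(B)\in\{0,1\}$. We will use this criterion: a shift-invariant event $E$ gives the harmonic function $h(x)=\bP_x(E)$, and by the martingale convergence theorem $h(X_n)\to 1_E$ almost surely. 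So it suffices to prove that bounded harmonic functions $h$, with $h(x)=\int h\,dP(x,\cdot)$, are constant.

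\textbf{Step 1: local constancy.} Write the harmonicity as $h(x)=\int_M h(y)\,\eta_\varepsilon^{F(x)}(dy)$, so that $h=g\circ F$ with $g(z)=\int h(y)\,\eta^z_\varepsilon(dy)$. Suppose $h$ is harmonic with $0\le h\le 1$; working with invariant sets, we may take $h=1_B$. Then for $m$-a.e.\ $x\in B$ we have $\eta^{F(x)}_\varepsilon(M\setminus B)=0$. Since $\rho>0$ on $B_\varepsilon(0,0)$, the ball $B_\varepsilon(F(x))$ (wrapped in the cylinder via the identification in (\ref{perturbdef})) is contained in $B$ up to an $m$-null set. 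The same argument applied to the complement shows that for a.e.\ $x\notin B$ the wrapped ball $B_\varepsilon(F(x))$ lies in $M\setminus B$ mod null sets.

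\textbf{Step 2: the image is a union of components.} Because $F$ preserves $m$ and is invertible off the singularity set $S_1$ (which is $m$-null), the set $F(B)$ has full measure in $F(B)$'s closure argument as follows. For $m$-a.e.\ $z\in F(B)$ the $\varepsilon$-ball around $z$ lies in $B$ mod $0$. Hence the set $B$ contains, mod $0$, the open $\varepsilon$-neighbourhood of the set of density points of $F(B)$. Taking $m(B)>0$, this neighbourhood is a nonempty open set. We then run the same argument with the roles of $B$ and $F(B)$ interchanged using $m(F(B))=m(B)$, together with $F^{-1}$, to conclude the following. The set $B$ agrees mod $0$ with an open set $O$ such that the density points of $B$ have a full $\varepsilon$-ball in $B$. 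A cleaner way to phrase it: Lebesgue density points of $B$ must be interior points mod $0$, and likewise for $M\setminus B$. Since each cylinder $M_i$ is connected, $B\cap M_i$ is null or conull in $M_i$. We expect this to be the main obstacle: passing from ``balls around points of $F(B)$'' to ``balls around points of $B$''. We would handle it by iterating. Note that $F(B)$ agrees with $B$ mod $0$: indeed $B\supset$ the $\varepsilon$-fattening of $F(B)$ mod $0$, and $m(F(B))=m(B)$, so $B=F(B)$ mod $0$. Once this holds, every density point of $B$ carries an $\varepsilon$-ball in $B$. The standard connectedness argument then applies: the set of density points is open, its complement's density points are open, and both are disjoint, so by connectedness one of them is null.

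\textbf{Step 3: mixing the components.} It remains to show that $B$ cannot be a union of some but not all cylinders $M_i$. Since $\cD_0$ is connected, the billiard map sends a positive-measure set of $M_i$ into $M_j$ whenever the two boundary components see each other, and the visibility graph on $\{\Gamma_1,\dots,\Gamma_n\}$ is connected. The invariance $F(B)=B$ mod $0$ from Step 2 then forces $B$ to be either all of $M$ or empty, up to null sets. This gives $m(B)\in\{0,1\}$ and hence the ergodicity of $\bP_m$.
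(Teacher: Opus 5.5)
Your argument is correct in substance, but it takes a different route from the paper. The paper proves ergodicity by showing that $m$ is the \emph{unique} $P$-invariant measure. Since $P(x,\cdot)$ has a density, every invariant measure is absolutely continuous with respect to $m$, so there are at most countably many mutually singular ergodic ones. The paper then asserts that nearby points lie in the same ergodic component because of the $\varepsilon$-ball, which leaves $m$ as the only ergodic, hence only, invariant measure.

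You instead fix $m$ and reduce ergodicity of the shift on $(\Omega,\cA,\bP_m)$ to triviality of $P$-invariant sets, via harmonic functions and L\'evy's zero--one law. You then analyse an invariant set $B$ directly. The paper's route, if completed, gives the stronger conclusion that the stationary measure is unique. Yours gives only ergodicity of $m$, which is all that is used later (Theorem~\ref{hyperbolicity} and the circular billiard). In exchange, yours supplies the mechanism the paper leaves implicit:
\begin{itemize}
\item The noise produces balls around $F(x)$, not around $x$. You bridge this with $F(B)=B$ mod $0$, which you get from $m(F(B))=m(B)$.
\item The noise never moves a point from one cylinder $M_i$ to another. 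So for $n\ge 2$ one needs the transitions of $F$ between boundary components, which the paper's proof does not address.
\end{itemize}

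Two steps should be tightened.

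First, the closing sentence of your Step 2 (``both open and disjoint, so by connectedness one of them is null'') is not sufficient as stated. Two disjoint open sets whose union has full measure need not exhaust a connected space. What makes it work is the uniform radius. Suppose every point $y$ in the closure of the good part of $B$ satisfies $B_\varepsilon(y)\subset B$ mod $0$; use a countable subcover to pass from individual balls to their union. Then the set of density points of $B$ is closed as well as open, since a limit of such points lies within $\varepsilon$ of one of them. Hence its intersection with each $M_i$ is empty or all of $M_i$.

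Second, connectivity of the visibility graph on $\Gamma_1,\dots,\Gamma_n$ is only asserted. It is true and can be argued as follows. Rotate a ray issuing from a point $q\in\cD_0$. At a direction where the first component hit switches, the limiting ray grazes one component (tangentially or at a corner) and then hits the other, which gives an open family of segments between them. Then move $q$ along paths in the connected set $\cD_0$.
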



\begin{proof}
 By the definition of the process $(X_n)_{n\geq 0}$ in (\ref{markovchain}), we know that 
the process $(X_n)$ defined by equation (\ref{markovchain}) is a Markov process with Markov kernel: $$P(x,B) = \eta_{\varepsilon}^{F(x)} (B)$$ for $x\in M$ and $B\in \cB$. We need to prove that the measure $m$ on $M$ is ergodic for this Markov process. 

By Lemma \ref{muinv}, the measure $m$ is an invariant probability measure for this Markov process. We will now show that this is in fact the unique invariant measure.


Since the transition probability has a density function, we observe that if $B\in \cB$ is such that $m(B) = 0$ then $P(x,B) = 0$ as functions of $x$. Thus the support of every invariant measure for $\cL$ is also has positive measure with respect to the measure $m$. Thus there can be at most countably many of invariant measures for $\cL$.

Recall that for any $x\in M$, the density function of the transition probability $P(x, dy)$ is positive on $B_{\varepsilon}(F(x))$. This condition means that if we start from $x$ then in next step of the process we are allowed to go to anywhere in a ball of radius $\varepsilon$ centred at the point $F(x)$.  Under this condition, there can be almost countably many ergodic invariant measures with respect to $P$. 

Two nearby points are in the same ergodic components due to the perturbation. Because two distinct ergodic measures are mutually singular, the Lebesgue measure must be the only ergodic measure. In fact, this implies that it is the only invariant measure with respect to $P$.


\end{proof}

As we know that ergodicity may not imply hyperbolicity. One such example is an  independent, identically-distributed stochastic process $\{\xi_n\}$. By the law of large numbers, the process $\{\xi_n\}$ is ergodic, but its Lyapunov exponents are zeros. Below we will provide sufficient conditions which guarantees the hyperbolicity for our Markov process $\{X_n\}$.
\begin{theorem}\label{hyperbolicity}
Let $\calD$ be a classical billiard such that the table's boundary satisfies Assumption $\mathbf{(HB)}$. Consider a random perturbation to the system given by the transition function $P$ as defined in \ref{markovkernel}.
Assume that the derivative $DF: M\to \SL(2,\bR)$ satisfies one of the two hypotheses $\mathbf{(H2)}$ and $\mathbf{(H3)}$:

$\mathbf{(H2)}$: there exist non-empty open sets $U$ and $V$ in $M$ such that $DF$ has distinct real eigenvalues on $U$ and only complex eigenvalues on $V$.

$\mathbf{(H3)}$: there exist non-empty open sets $U$ and $V$ in $M$ such that $DF$ has distinct real eigenvalues on $U$ and $V$ but the eigenvectors on $U$ are different from those on $V$.

Let $(X_n)_{n\geq 0}$ be the Markov process with transition $P$ and initial distribution $m$. Let $\lambda_1 \geq \lambda_2$ be the Lyapunov exponents associated to the Markov process $(X_n)_{n\geq 0}$ and the derivative map $DF$. Then  $\lambda_1 > 0$.
\end{theorem}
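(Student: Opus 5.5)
We argue by contradiction, following the strategy announced before Theorem \ref{led}. Suppose $\lambda_1 = 0$. First check that $\mathbf{(H1)}$ holds for $A = DF$ with respect to $m$: since $\det DF = 1$, we have $\norm{DF(x)^{-1}} = \norm{DF(x)}$. From the explicit formula for $DF$, $\norm{DF(x)} \lesssim 1/\cos\varphi_1 + 1/\cos\varphi$, and $\int \log^+(1/\cos\varphi)\, dr\, ds < \infty$, so (H1) holds. Theorem \ref{ergodicity} gives ergodicity, so Corollary \ref{sumlya0} applies and $\lambda_2 = -\lambda_1 = 0$. Lemma \ref{lambda1=0} then yields a measurable family $\{\xi_x\}$ of probability measures on $\bbP$ such that, for $m$-a.e. $x$,
\[
\xi_y = DF(x)_*\xi_x \quad \text{for } P(x,\cdot)\text{-a.e. } y,
\]
that is, for Lebesgue-a.e. $y \in B_\varepsilon(F(x))$.

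\textbf{Step 1: rigidity.} The right-hand side does not depend on $y$. Hence $\xi$ is a.e. constant on the ball $B_\varepsilon(F(x))$ for a.e. $x$. Since $F$ is measure preserving and essentially onto, $\xi$ is locally a.e. constant on $M$. On each connected component $M_i$ we can chain overlapping balls, which shows that $\xi_y \equiv \xi^{(i)}$ is a.e. constant on each $M_i$. To keep notation simple, work on a component containing the relevant open sets; the cross-component bookkeeping is routine. We then get $DF(x)_*\xi = \xi$ for a.e. $x$ with $F(x)$ in the same component. More generally, $\xi^{(j)} = DF(x)_*\xi^{(i)}$ for $x \in M_i$ with $F(x) \in M_j$. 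In the single-component case this says the single measure $\xi$ is invariant under $DF(x)$ for a.e. $x$, and in particular for a.e. $x \in U \cup V$.

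\textbf{Step 2: classify invariant measures on $\bbP$.} For $g \in \SL(2,\bR)$ with distinct real eigenvalues, i.e. hyperbolic, every $g$-invariant probability measure on $\bbP$ is supported on its two eigendirections. To see this, iterate $g^n_*\xi = \xi$: every non-eigen direction is pushed toward the attracting eigendirection, so no mass can sit off the eigenlines. For an elliptic $g$ with non-real eigenvalues, $g$ acts on $\bbP$ as a conjugate of a rotation and has no fixed direction. Under $\mathbf{(H2)}$, $\xi$ is supported on the eigendirections of $DF(x)$ for $x \in U$, so it is atomic with at most two atoms. Pick $x \in V$ with $DF(x)$ elliptic. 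Because $DF(x)$ is continuous on the open set $V$ (away from singularities) and its rotation angle varies, we can choose $x$ with irrational rotation number, or simply one whose orbit of the atoms is not contained in a set of two points. An elliptic map preserving a measure with at most two atoms must permute the atoms, so $DF(x)^2$ fixes each atom and hence $DF(x)^2 = \pm I$. This can hold at most on a nowhere-dense set of $V$ by analyticity in the parameters, giving a contradiction. Under $\mathbf{(H3)}$, $\xi$ is supported on $E^u(x) \cup E^s(x)$ for $x \in U$ and also on $E^u(x') \cup E^s(x') $ for $x' \in V$. These sets are disjoint, or at least not identical; if only one direction is shared, $\xi$ becomes a Dirac measure on that direction and a perturbation within the open sets moves it. This contradicts $\xi$ being a probability measure. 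Either way $\lambda_1 \neq 0$, so $\lambda_1 > 0$.

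\textbf{Main obstacle.} The hardest step is Step 1 together with the multi-component bookkeeping. The relation from Lemma \ref{lambda1=0} holds only almost everywhere, and the support of $\rho$ is only known to contain $B_\varepsilon(0)$. One must carefully upgrade "a.e. constant on each ball $B_\varepsilon(F(x))$" to "constant on a component", using that $F$ is invertible off the singular set and preserves $m$. In Step 2 one also needs a genuine open-set hypothesis to exclude degenerate coincidences such as $DF^2 = \pm I$ or shared eigendirections. I would handle this by reading $\mathbf{(H2)}$ and $\mathbf{(H3)}$ as allowing us to shrink $U$ and $V$ so that the eigendata vary nontrivially.
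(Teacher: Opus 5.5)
Your proposal follows the paper's proof: assume $\lambda_1=0$, apply Lemma \ref{lambda1=0}, and use the positive noise density on $B_\varepsilon(F(x))$ to make $\xi_x$ a.e.\ constant. Then hyperbolic $DF$ on $U$ confines $\xi$ to two eigendirections, which is incompatible with invariance under elliptic $DF$ on $V$, or under hyperbolic $DF$ with different eigendirections on $V$. Your elliptic step (an elliptic matrix can preserve a measure with at most two atoms only by swapping them, which forces $\trace DF(x)=0$) is a slightly sharper variant of the paper's irrational-rotation argument. It needs only continuity of $\trace DF$ on $V$, not the analyticity you invoke, which a $C^3$ boundary does not provide. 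It also needs $\trace DF\not\equiv 0$ on $V$; this is the same kind of unstated non-degeneracy the paper tacitly uses when it picks an irrational rotation angle, and the single-component reduction you call routine is likewise left implicit in the paper.
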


\begin{proof}
  Suppose to the contrary that $\lambda_1 = 0$. $A = DF$ satisfies the condition $\mathbf{(H1)}$ in Lemma \ref{lambda1=0} as shown in Lemma 3.6 of \cite{cb}. Therefore there exists a measurable family $\xi: x\mapsto \xi_x$ of probability measures on $\bbP$ indexed by $M$ such that for $m$-almost every $x\in M$:
\begin{equation}
	\xi_y = (DF(x))_*\xi_x
\end{equation}
for $P(x, .)$-almost every $y$.

  For any $x \in M$ the support of $P(x,.)$ contains a ball $B_{\varepsilon}(F(x))$ of radius $\varepsilon>0$ and centred at $F(x)$. Consider a partition of $M$ by squares of size $\varepsilon/2$. Since $F$ is an invertible map, $\xi$ is constant on the union of any 4 adjacent squares and hence $m$-almost everywhere on $M$. So there exists a subset $S\subset M$ with $m(S) = 1$ such that $\xi$ is constant at every point in $S$. From now on, we will also use $\xi$ to denote the measure $\xi_x$ of any $x\in S$. 

Let $x \in S$, we have that $\xi = (DF(x))_*\xi$. By iterating the matrix $DF(x)$ we have $$\xi = (DF(x))_*^{n}\xi$$ for every $n\geq 1$. 

Suppose that $DF(x_1)$ has distinct real eigenvalues $\alpha_1$ and $\alpha_2$ for some $x_1\in S$. As $\det{DF(x_1)} = 1$, we can assume that $|\alpha_1|> 1 >  |\alpha_2|$. Let $v_i \in \bR^2$ be a unit eigenvector for $\alpha_i$, $i = 1,2$. Let $v\in \bR^2$ be any nonzero vector non-proportional to $v_2$ and consider the sequence of unit vectors:
\[
  u_n =\frac{DF(x_1)^n(v)}{\norm{DF(x_1)^n(v)}}.
\]

As $n\to \infty$, the angle between $u_n$ and $v_1$ converges to 0 or $\pi$. Therefore the probability measure $\xi$ must be concentrated only on the direction of $v_1$ and $v_2$. In other words, we must have that $\xi  = c_1\delta_{[v_1]} + c_2\delta_{[v_2]}$ for some constants $c_1$ and $c_2$.

Let $x_2\in S$ such that  $DF(x_2)$ has complex eigenvalues. By a change of coordinates, $DF(x_2)$ becomes a rotation matrix. We could assume that $DF(x_2)$'s rotational angle is an irrational multiple of $2\pi$ as the rotational angle varies continuously wherever the billiard map is $C^2$. If the rotational angle is an irrational multiple of $2\pi$ then the probability measure $\xi$ must be the Lebesgue measure on $\bbP$. This is a contradiction to the fact that $\xi$ is discrete.

Let $x_3 \in S$ such that $DF(x_3)$ has distinct real eigenvalues and eigenvectors $w_1$ and $w_2$, such that $\{[w_1], [w_2]\} \cap \{[v_1], [v_2]\} = \emptyset$. Then $\xi = d_1\delta_{[w_1]} + d_2\delta_{[w_2]}$ for some constants $d_1$ and $d_2$. But this contradicts the fact that $\xi  = c_1\delta_{[v_1]} + c_2\delta_{[v_2]}$.



\end{proof}

\subsection{Random non-circular elliptic billiards}
\begin{theorem}
Let $\calD$ be any non-circular elliptic billiard table. Consider any random perturbation to the billiard map on $\calD$ as defined in (\ref{perturbdef}). The resulting random billiard is ergodic and hyperbolic.
\end{theorem}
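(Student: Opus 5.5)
Ergodicity is immediate from Theorem \ref{ergodicity}. An ellipse has a smooth $C^\infty$ boundary, so Assumption $\mathbf{(HB)}$ holds, and the perturbation is the one in (\ref{perturbdef}). So the whole task is hyperbolicity. For that I will verify hypothesis $\mathbf{(H2)}$ of Theorem \ref{hyperbolicity} for $A = DF$ and then apply the theorem. To do so I will exhibit an open set where $DF$ has complex eigenvalues and an open set where $DF$ has distinct real eigenvalues. Since $\det DF = 1$, this amounts to showing that $|\operatorname{tr} DF(x)| < 2$ on one open set and $|\operatorname{tr} DF(x)| > 2$ on another. From the formula for $DF$, conjugation by the diagonal matrices does not change the trace, so
\[
\operatorname{tr} DF(x) = \frac{-1}{\cos\varphi_1}\Bigl(\cos\varphi + \cos\varphi_1 - \tau(K+K_1)\Bigr) ,
\]
which is continuous in $x$ wherever $F$ is $C^2$. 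In an ellipse this is every point with $s \neq \pm 1$. Because the trace is continuous there, strict inequalities at a single point propagate to an open neighbourhood, so it is enough to compute the trace at well-chosen points.

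For the complex-eigenvalue set I will take the collision along the minor axis. Let the ellipse have semi-axes $a > b$. Take the point $x$ at an endpoint of the minor axis with $\varphi = 0$, which bounces to the opposite endpoint. Here $\varphi = \varphi_1 = 0$, $\tau = 2b$, and $K = K_1 = b/a^2$. The trace is then $-(2 - 4b^2/a^2)$, and since $0 < b/a < 1$ its absolute value is $|2 - 4b^2/a^2| < 2$. So the minor-axis 2-periodic orbit is elliptic and $DF(x)$ has non-real eigenvalues. For the real-eigenvalue set I will take the collision along the major axis: $\tau = 2a$ and $K = K_1 = a/b^2$. The trace is $-(2 - 4a^2/b^2)$, with absolute value $4a^2/b^2 - 2 > 2$, since $a^2/b^2 > 1$. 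So the major-axis orbit is hyperbolic, and $DF$ has distinct real eigenvalues there. This is exactly the "source of hyperbolicity" mentioned in the introduction. Both points lie in the interior $\{|s| < 1\}$, away from singularities, so small balls $U$ and $V$ around them work. Non-circularity, $a \neq b$, is used exactly here: for $a = b$ both traces equal $-2$ and both strict inequalities fail. Theorem \ref{hyperbolicity} then gives $\lambda_1 > 0$, and Corollary \ref{sumlya0} gives $\lambda_2 = -\lambda_1 < 0$, which is hyperbolicity.

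The main obstacle is not the trace computation but checking that the hypotheses of Theorem \ref{hyperbolicity} really apply. First, the integrability condition $\mathbf{(H1)}$ for $DF$ with respect to $m$ must hold. I will argue it from the explicit formula. The factor $1/\cos\varphi_1$ blows up only like $1/\sqrt{1-s_1^2}$ near grazing collisions, and $\log$ of this is $m$-integrable. The trace and curvature terms are bounded because the curvature of the ellipse is bounded and $\tau \le 2a$. Second, I must make sure the open sets $U$ and $V$ have positive $m$-measure intersection with the full-measure set $S$ used in that proof. This is automatic since they are non-empty open sets and $m$ is Lebesgue.
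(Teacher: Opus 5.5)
Your proposal is correct and follows the paper's own proof. Ergodicity comes from Theorem~\ref{ergodicity}, and hyperbolicity comes from checking $\mathbf{(H2)}$ via the trace at the major-axis ($4a^2/b^2-2>2$) and minor-axis ($|4b^2/a^2-2|<2$) period-two points, extending to open sets by continuity, and then invoking Theorem~\ref{hyperbolicity}.

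There are two harmless slips. First, the diagonal factors in $DF$ are not mutually inverse unless $\cos\varphi=\cos\varphi_1$, so they do not form a conjugation. The true trace is $\frac{\tau K-\cos\varphi}{\cos\varphi_1}+\frac{\tau K_1-\cos\varphi_1}{\cos\varphi}$, and your formula agrees with it only at the points where you evaluate it. Second, for $a=b$ both traces equal $+2$, not $-2$.
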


\begin{proof}
Let  $F:M\to M$ be the billiard map with coordinates $r$ and $s = \sin(\phi)$ as usual. The derivative map $DF$ in the coordinates $r$ and $s= \sin(\varphi)$ is given by

\[ DF(x) = 
	\begin{pmatrix}
		\frac{-1}{\cos(\varphi_1)} & 0\\
		0 &-1\\
	\end{pmatrix}
\begin{pmatrix}
-\tau K + \cos(\varphi) & \tau\\
\tau KK_1 - K\cos(\varphi_1) - K_1\cos(\varphi)&-\tau K_1 +\cos(\varphi_1)
\end{pmatrix}
	\begin{pmatrix}
		1 & 0\\
		0 & \frac{1}{\cos(\varphi)}
	\end{pmatrix}.
\]

We have $\det(DF(x)) = 1$ and \begin{equation}\trace(DF(x)) = \frac{\tau K - \cos(\varphi)}{\cos(\varphi_1)} + \frac{\tau K_1 - \cos(\varphi_1)}{\cos(\varphi)}.\end{equation}

At the point $x_1 \in M$ corresponding to $u = a, v = 0$, $\varphi = \varphi_1 = 0$, we have $\tau = 2a$, $K = K_1 = \frac{a}{b^2}$ and therefore $\trace(DF(x_1)) = \frac{4a^2}{b^2}-2 > 2.$

At the point $x_2 \in M$ corresponding to $u = 0, v = b$, $\varphi = \varphi_1 = 0$, we have $\tau = 2b$, $K = K_1 = \frac{b}{a^2}$ and therefore $\trace(DF(x_2)) = \frac{4b^2}{a^2}-2$. It's clear that $|\trace(DF(x_2))| < 2.$

Since the derivative is a smooth function, $\trace(DF(x)) > 2$ for any $x$ sufficiently close to $x_1$ and similarly $|\trace(DF(x))| < 2$ for $x$ sufficiently close to $x_2$. Thus the perturbed elliptical billiards satisfy the condition $(\mathbf{H2})$ in Theorem \ref{hyperbolicity}. Among those $x$ such that $DF(x)$ has complex eigenvalues, there is a subset of them with positive measure such that at those points the derivative corresponds to irrational rotations. Because of this mixture of real and complex eigenvalues, there cannot be any probability measure on $\bbP$ that is invariant under $DF(x)$ for $m$-almost every $x$.

\end{proof}

\subsection{Lemon billiards}
 Lemon-shaped billiards are a type of convex billiards where the tables are formed by the intersection of two disks \cite{heto}, \cite{cmzz}.

\begin{theorem}
Let $\calD$ be a lemon billiard table. Consider any random perturbation to the billiard map on $\calD$ as defined in (\ref{perturbdef}). The resulting random billiard is ergodic and hyperbolic.
\end{theorem}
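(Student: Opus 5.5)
Ergodicity comes directly from Theorem \ref{ergodicity}. A lemon table is the intersection of two disks, so its boundary consists of two circular arcs meeting at two corners. It is therefore a piecewise $C^3$ simple closed curve and satisfies Assumption $\mathbf{(HB)}$. The only remaining task is hyperbolicity. For that I would verify hypothesis $\mathbf{(H2)}$ of Theorem \ref{hyperbolicity} and then invoke that theorem, mirroring the argument for the elliptic table. Condition $\mathbf{(H1)}$ for $A = DF$ holds exactly as in that proof, by Lemma 3.6 of \cite{cb}.

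The tool is the trace formula
\[
\trace(DF(x)) = \frac{\tau K - \cos(\varphi)}{\cos(\varphi_1)} + \frac{\tau K_1 - \cos(\varphi_1)}{\cos(\varphi)} .
\]
Let the two arcs have radii $R_1, R_2$, with centres at distance $d$ satisfying $|R_1 - R_2| < d < R_1+R_2$.

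\textbf{Elliptic region (complex eigenvalues).} Take a collision $x$ on the first arc with $\varphi$ close to $0$ whose next collision is also on the first arc. Such collisions exist in an open set, because chords of arc $1$ near the arc and away from the corners are admissible. Here $K = K_1 = 1/R_1$ and, for a circle, $\varphi_1 = \varphi$ and $\tau = 2R_1\cos\varphi$. The formula then gives $\trace = 2(2\cos\varphi - \cos\varphi)/\cos\varphi = 2$. So consecutive collisions on the same arc are parabolic, not elliptic, and this choice of $x$ does not work.

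I would therefore use one-step transitions between the two arcs. Consider the symmetric axis orbit through the two centres. It hits arc $1$ and arc $2$ perpendicularly, so $\varphi = \varphi_1 = 0$. The chord length is $\tau = R_1 + R_2 - d$, and $K = 1/R_1$, $K_1 = 1/R_2$. The trace at this point is $\tau/R_1 + \tau/R_2 - 2$.

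If $\tau/R_1 + \tau/R_2 - 2 < 2$ and $> -2$, this orbit gives an open set with complex eigenvalues. If the value is $> 2$, it gives an open set with distinct real eigenvalues. By continuity of $DF$ away from the singularity set, each inequality persists on an open neighbourhood, just as in the elliptic case.

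\textbf{The complementary region.} I then need an open set of the other type.
\begin{itemize}
\item If the axis value is hyperbolic, I would look at chords from arc $1$ to arc $2$ near a corner. There $\tau \to 0$ and the trace tends to $-\cos\varphi/\cos\varphi_1 - \cos\varphi_1/\cos\varphi \le -2$, with equality only when $\varphi = \varphi_1$. Adding the first-order $\tau$ term would give $|\trace|<2$ on an open set. This works as long as the sign of the correction is right, which requires choosing $\varphi \ne \varphi_1$ suitably.
\item If the axis value is elliptic, I would take a one-arc-to-other transition with $\tau K$ large. Failing that, I would use the two-step map $DF^2$. In that case I would apply Theorem \ref{hyperbolicity}'s argument to $F^2$, noting that the argument only uses that a fixed invariant projective measure cannot be preserved by both a hyperbolic and an irrational elliptic matrix.
\end{itemize}

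\textbf{Main obstacle.} The hard step is guaranteeing both regimes for \emph{every} lemon, rather than only for the generic parameter values $(R_1,R_2,d)$. In degenerate cases the computed traces may sit exactly at $\pm 2$ on the natural test orbits. My first fallback is $\mathbf{(H3)}$: compare the eigendirections of $DF$ at two different hyperbolic points, for example near the axis orbit and near a corner chord, and show they differ. The eigenvector directions are explicit from the matrix formula and vary nonconstantly in $(r,s)$, so $\mathbf{(H3)}$ should hold whenever $\mathbf{(H2)}$ fails. Once either $\mathbf{(H2)}$ or $\mathbf{(H3)}$ is established, Theorem \ref{hyperbolicity} gives $\lambda_1>0$, and hence hyperbolicity.
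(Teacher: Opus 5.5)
\textbf{There is a genuine gap: hyperbolicity is never actually established.} Your ergodicity step is fine, and your trace computations are correct: same-arc chords give trace $2$, and the axis orbit gives $\tau_0(1/R_1+1/R_2)-2$ with $\tau_0=R_1+R_2-d$. But you never establish $\mathbf{(H2)}$ or $\mathbf{(H3)}$. You leave the type of the axis orbit undetermined. In the case that actually occurs you only list options (``a transition with $\tau K$ large'', $DF^2$, an $\mathbf{(H3)}$ comparison that ``should hold''), and none is carried out.

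The first missing observation is that the case split is unnecessary. The lemon condition $d>|R_1-R_2|$ gives $\tau_0<2\min(R_1,R_2)$. With $R_1\le R_2$ this yields $\trace DF(x_0)\le 2\tau_0/R_1-2<2$, and $\tau_0>0$ gives $\trace DF(x_0)>-2$. So the axis orbit is elliptic for \emph{every} lemon, and by continuity so is a neighbourhood $V$; this is exactly the paper's argument. Consequently your $\mathbf{(H3)}$ fallback, which uses hyperbolic points near the axis orbit, is not available.

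The second missing idea is the companion region to pair with $V$. The paper uses the sliding region, where $DF(x)=\begin{pmatrix}1&-2R_i/\cos\varphi\\0&1\end{pmatrix}$. You discarded these matrices because they are parabolic, but parabolicity is enough. Iterating a nontrivial shear drives every direction other than $[e_1]$, $e_1=(1,0)$, toward $[e_1]$. So the only probability on $\bbP$ it preserves is $\delta_{[e_1]}$. An elliptic matrix fixes no point of $\bbP$, so no measure can be invariant under $DF(x)$ for a.e.\ $x$. (Strictly, the sliding matrices do not have distinct real eigenvalues, so the paper's appeal to $\mathbf{(H2)}$ is loose; this point-mass argument is what makes its conclusion valid.)

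Alternatively, your own corner asymptotics already supply a genuinely hyperbolic region. You used them only in the non-occurring hyperbolic-axis case, and you read them backwards. For short chords between the arcs near a corner, $\trace DF\to-(\cos\varphi/\cos\varphi_1+\cos\varphi_1/\cos\varphi)$. This limit is \emph{strictly} below $-2$ whenever $\cos\varphi\neq\cos\varphi_1$, which holds on an open set of such chords because near the corner the two angles are tied only through the corner angle. Those chords therefore have $\trace<-2$, with no first-order analysis needed. The positive correction $\tau(K/\cos\varphi_1+K_1/\cos\varphi)$ matters only for nearly symmetric chords, where it pushes the trace into $(-2,2)$, which is the opposite of what you wrote.

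Either completion closes the gap. As written, the proposal stops at the step you yourself flag as the main obstacle.
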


\begin{proof}
 The boundary $\partial \calD$ of the table consists of two circular arcs $\calC_1$ and $\calC_2$ of radii $R_1$ and $R_2$ respectively. Without loss of generality, we can assume that $R_1 \leq R_2$. The movement of a particle on a lemon billiard is of the following types: sliding on one of the two circular arcs, or bounce from one arc to the other. 
 
 The derivative map $DF$ at a point $x = (r,\sin(\varphi))$ that corresponds to a sliding movement on $\calC_i$, $i = 1, 2$, is:
\begin{align}\displaystyle
	DF(x) &= \begin{pmatrix}
		1 & \frac{-2R_i}{\sqrt{1-s^2}}\\
		0 & 1\\
	\end{pmatrix}
	= \begin{pmatrix}
		1 & \frac{-2R_i}{\cos(\varphi)}\\
		0 & 1
	\end{pmatrix}\\
	&=\begin{pmatrix}
		1&0\\
		0&\cos(\varphi)\\
	\end{pmatrix}
	\begin{pmatrix}
		1 & -2R_i\\
		0 & 1\\
	\end{pmatrix}
	\begin{pmatrix}
		1&0\\
		0&\frac{1}{\cos(\varphi)}\\
	\end{pmatrix}.
\end{align}
Thus at any points $x$ that corresponds to a sliding on either of the two circular arcs, the derivative has only one eigenvector $\lambda = 1$. The eigenspace for $\lambda = 1$ is generated by $v(x) = \begin{pmatrix}
1\\
0
\end{pmatrix}$. There exists an open subset  $U\subset M$ such that every point in $U$ corresponds to a sliding on the billiard table.\\

Suppose now that $x=(r, \sin(\varphi))$ and $F(x) = (r_1,\sin(\varphi_1))$ such that $x$ and $F(x)$ are based on $\calC_1$ and $\calC_2$ respectively. The derivative map $DF$ at the point $x$ is:

\begin{equation}
\begin{aligned}
DF(x) = 
	\begin{pmatrix}
		1 & 0\\
		0 & \cos(\varphi_1)\\
	\end{pmatrix}
\frac{-1}{\cos(\varphi_1)}\begin{pmatrix}
-\tau K + \cos(\varphi) & \tau\\
\tau KK_1 - K\cos(\varphi_1) - K_1\cos(\varphi)&-\tau K_1 +\cos(\varphi_1)
\end{pmatrix}
	\begin{pmatrix}
		1 & 0\\
		0 & \frac{1}{\cos(\varphi)}
	\end{pmatrix}.
\end{aligned}
\end{equation}

Recall that we have $\det(DF(x)) = 1$ and
\begin{align*}
\trace(DF(x)) &= \frac{\tau K - \cos(\varphi)}{\cos(\varphi_1)} + \frac{\tau K_1 - \cos(\varphi_1)}{\cos(\varphi)}.
\end{align*}
Also note that $K = \frac{1}{R_1}$ and $K_1 = \frac{1}{R_2}$.

In particular, let $x_0$ be the periodic point of period 2 based on $\calC_1$ such that $F(x_0)$ is based on $\calC_2$. It correspond to a perpendicular bounce between the 2 arcs. Let $\tau_0$ be the free path from $x_0$. The derivative of the billiard map at $x_0$ is:

\begin{equation}
\begin{aligned}
DF(x_0) = 
- \begin{pmatrix}
-\tau_0 K +1 & \tau_0\\
\tau_0 KK_1 - K - K_1&-\tau_0 K_1 +1
\end{pmatrix}.
\end{aligned}
\end{equation}

And the trace is:
\begin{align*}
\trace(DF(x_0)) &= \tau_0 (K +K_1) - 2 = \tau_0 \left(\frac{1}{R_1} + \frac{1}{R_2}\right) - 2.\\
\end{align*}

Since $R_1 \leq R_2$ and $\tau_0 < 2R_1$, we have that:

\begin{align*}
\trace(DF(x_0)) &= \tau_0 \left(\frac{1}{R_1} + \frac{1}{R_2}\right) - 2\\
&\leq \tau_0 \left(\frac{2}{R_1} \right) - 2 < 2.\\
\end{align*}


Thus $|\trace(DF(x_0))| < 2$ and $DF(x_0)$ has two distinct complex eigenvalues. In fact, $DF$ has complex eigenvalues on an open neighbourhood of $V\subset M$ $x_0$ as the billiard map is smooth. Thus the perturbed lemon billiards satisfy the condition $(\mathbf{H2})$ of the Theorem \ref{hyperbolicity} and there can be no probability measures on $\bP^1$ that is invariant under the actions of the derivatives $DF(x)$. We have shown that the perturbed lemon billiards are ergodic and hyperbolic.


\end{proof}

\subsection{Random circular billiards}
\begin{theorem}
Let $\calD$ be any circular billiard table of radius $R>0$. Consider any random perturbation to the billiard map on $\calD$ as defined in (\ref{perturbdef}). The resulting random billiard is ergodic, but the Lyapunov exponents are always 0.
\end{theorem}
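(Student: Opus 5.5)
Ergodicity needs no new argument. A circular table has a single smooth $C^3$ boundary curve and no corners, so it satisfies Assumption $\mathbf{(HB)}$. Theorem \ref{ergodicity} therefore applies directly and gives that $m$ is the unique, hence ergodic, invariant measure of the random circular billiard. The real content is showing $\lambda_1=0$, and the plan is to compute $DF$ explicitly in the $(r,s)$ coordinates and see that it is the same unipotent matrix at every point.

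For the circle of radius $R$, every collision is a sliding along a single arc. The computation already carried out for lemon billiards applies verbatim with $K=K_1=1/R$, $\varphi_1=\varphi$ and $\tau=2R\cos\varphi$. It gives
\[
DF(x)=\begin{pmatrix}1&0\\0&\cos\varphi\end{pmatrix}\begin{pmatrix}1&-2R\\0&1\end{pmatrix}\begin{pmatrix}1&0\\0&\frac{1}{\cos\varphi}\end{pmatrix}=\begin{pmatrix}1&\frac{-2R}{\cos\varphi}\\0&1\end{pmatrix}.
\]
I will recheck this directly from the general formula for $DF$, since the sign conventions there differ slightly from the lemon computation.

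The key observation is that, whatever the value of $\varphi$, $DF(x)$ is upper triangular with ones on the diagonal. Upper unitriangular matrices form a group, and $(N(a)N(b))_{12}=a+b$ where $N(a)=\begin{pmatrix}1&a\\0&1\end{pmatrix}$. Hence along any noisy trajectory $x_0,x_1,\dots$ of the Markov chain (\ref{markovchain}), whatever the noise $\uu$,
\[
A^{(n)}(\omega)=DF(x_{n-1})\cdots DF(x_0)=\begin{pmatrix}1&-2R\sum_{k=0}^{n-1}\frac{1}{\cos\varphi_k}\\0&1\end{pmatrix}.
\]
This is the point where the noise is irrelevant: the perturbation is a translation, so it does not enter the derivative, and it only changes which $\varphi_k$ appear. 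The norm of this product satisfies
\[
\norm{A^{(n)}}\le 1+2R\sum_{k<n}\frac{1}{\cos\varphi_k}.
\]

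It remains to show that $\frac1n\log$ of the right-hand side tends to $0$, and this is the main obstacle, because $1/\cos\varphi$ is not bounded near grazing collisions. I would handle it via integrability. Under $m$, which is uniform in $(r,s)$, we have $1/\cos\varphi=(1-s^2)^{-1/2}$, which is integrable on $[-1,1]$. By stationarity and ergodicity (Theorem \ref{ergodicity}) together with Birkhoff's theorem, $\frac1n\sum_{k<n}1/\cos\varphi_k$ converges almost surely to a finite constant. Consequently $\log\norm{A^{(n)}}=O(\log n)$ almost surely, and so $\lambda_1=0$ by Theorem \ref{lyaergodic}. Condition $\mathbf{(H1)}$ holds because $\log\norm{DF}\lesssim\log(1+2R/\cos\varphi)$ is integrable. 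Since $\lambda_1+\lambda_2=0$ by Corollary \ref{sumlya0}, we also get $\lambda_2=0$.

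As a consistency check, Lemma \ref{lambda1=0} is satisfied by the constant family $\xi_x=\delta_{[(1,0)^T]}$, which is invariant under every $DF(x)$. This is exactly the invariant-measure obstruction that Theorem \ref{hyperbolicity} needs to rule out in the hyperbolic cases.
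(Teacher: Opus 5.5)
Your proposal is correct and follows the paper's proof essentially step for step. You take ergodicity from Theorem \ref{ergodicity}, use the upper unitriangular form of $DF$ so that the product has off-diagonal entry $-2R\sum_{k<n}1/\cos\varphi_k$, and apply Birkhoff's theorem to the $m$-integrable function $1/\cos\varphi$ (the paper computes its integral as $\pi/2$) to get $\log\norm{A^{(n)}}=O(\log n)$ and hence $\lambda_1=0$; your explicit checks of $\mathbf{(H1)}$ and of $\lambda_2=0$ via Corollary \ref{sumlya0} are small additions the paper leaves implicit.
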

\begin{proof}
The derivative of the billiard map in this case is:
\begin{align}
	DF(x) &= \begin{pmatrix}
		1 & \frac{-2R}{\sqrt{1-s^2}}\\
		0 & 1\\
	\end{pmatrix}
	= \begin{pmatrix}
		1 & \frac{-2R}{\cos(\varphi)}\\
		0 & 1
	\end{pmatrix}\\
	&=\begin{pmatrix}
		1&0\\
		0&\cos(\varphi)\\
	\end{pmatrix}
	\begin{pmatrix}
		1 & -2R\\
		0 & 1\\
	\end{pmatrix}
	\begin{pmatrix}
		1&0\\
		0&\frac{1}{\cos(\varphi)}\\
	\end{pmatrix}.
\end{align}
Let $(X_n)_{n\geq 0}$ be the corresponding Markov process with transition $P$ and initial distribution $m$. By Theorem \ref{ergodicity}, the dynamical system $(\Omega, \calA, \bP_m, \theta)$ associated to $(X_n)_{n\geq 0}$ is ergodic, and thus the largest Lyapunov exponent in this case is:
\begin{align}\label{lyacos}
	\lambda_1 &= \lim_{n\to \infty}\frac{1}{n}\log \norm{
	\begin{pmatrix}
		1 & -2R\left(\frac{1}{\cos(\varphi_0)} + \cdots + \frac{1}{\cos(\varphi_{n-1})}\right)\\
		0 & 1\\
	\end{pmatrix}}\\
	&=\lim_{n\to \infty} \frac{1}{n}\log \left(2R\left( \frac{1}{\cos(\varphi_0)} + \cdots + \frac{1}{\cos(\varphi_{n-1})}\right)\right).
\end{align}
for $\bP_{m}$-almost every sequence $\omega = (x_0, x_1, \dots)$.
In the above equality, we used the max norm for the matrices.

Recall that any point $x\in M$ has two coordinates $r$ and $s = \sin(\varphi)$. Let $g: M \to [1,\infty)$ be the function defined by:
\[
  g(r,s) = \frac{1}{\cos(\varphi)},
\]
where $s = \sin(\varphi)$.
We have that:
\begin{align}
  \int_M g(x)m(dx) &= \frac{1}{2|\partial \calD|}\int_M \frac{1}{\cos(\varphi)}dsdr\\
  &=\frac{1}{2|\partial \calD|}\int_{-\pi/2}^{\pi/2}d\varphi\int_{\partial\cD} dr\\
  &= \frac{\pi}{2}.
\end{align}
Therefore, by the Birkhoff's Ergodic Theorem, we have:
\begin{equation}\label{sumcos}
  \lim_{n\to \infty}\frac{1}{n}\left( \frac{1}{\cos(\varphi_0)} + \frac{1}{\cos(\varphi_1)} +  \cdots + \frac{1}{\cos(\varphi_{n-1})} \right)  = \frac{\pi}{2}.
\end{equation}
for $\bP_{m}$-almost every sequence $\omega = (x_0, x_1, \dots)$.

Let $\omega = (x_0, x_1, \dots)$ be a sequence such that both Eq. (\ref{lyacos}) and (\ref{sumcos}) hold for $\omega$. Then there exists an integer $N > 0$ large enough such that for every $n\geq N$ we have:
\[
\frac{\pi}{3} \leq \frac{1}{n}\left( \frac{1}{\cos(\varphi_0)} + \frac{1}{\cos(\varphi_1)} +  \cdots + \frac{1}{\cos(\varphi_{n-1})}\right) \leq \pi.
\]
Therefore:
\begin{equation}
\lambda_1 \leq \lim_{n\to \infty} \frac{1}{n}(\log(2R) + \log(n\pi)) = 0.
\end{equation}
This implies that for  $0 \leq \lambda_1 \leq 0$, therefore $\lambda_1 = 0$. 

\end{proof}


\end{document}